\newcommand*{\mailto}[1]{\href{mailto:#1}{\nolinkurl{#1}}}
\newtheorem{theorem}{Theorem}[section]
\newtheorem{lemma}[theorem]{Lemma}
\newtheorem{remark}[theorem]{Remark}
\newtheorem{proposition}[theorem]{Proposition}
\numberwithin{equation}{section}
\newtheorem{definition}[theorem]{Definition}
\newcommand{\Winf}{{W^{1,\infty}(\Real)}}
\newcommand{\epsi}{\varepsilon}
\newcommand{\Gr}{G}
\newcommand{\D}{\ensuremath{\mathcal{D}}}
\newcommand{\G}{\ensuremath{\mathcal{G}}}
\newcommand{\F}{\ensuremath{\mathcal{F}}}
\newcommand{\Real}{\mathbb{R}}
\newcommand{\E}{\mathrm{e}}
\DeclareMathOperator{\id}{Id}
\newcommand{\dott}{\,\cdot\,}
\newcommand{\sign}{\mathop{\rm sign}}
\newcommand{\nn}{\nonumber}
\newcommand{\norm}[1]{\left\Vert#1\right\Vert}
\newcommand{\norms}[1]{\Vert#1\Vert}
\newcommand{\abs}[1]{\left\vert#1\right\vert}
\def\XXint#1#2#3{{\setbox0=\hbox{$#1{#2#3}{\int}$}
    \vcenter{\hbox{$#2#3$}}\kern-.5\wd0}}
\numberwithin{equation}{section}
\begin{document}

\title[Lipschitz metric for the Camassa--Holm
equation]{Lipschitz metric for the Camassa--Holm
  equation on the line}

\author[K. Grunert]{Katrin Grunert}
\address{Faculty of Mathematics\\ University of
  Vienna\\ Nordbergstrasse 15\\ A-1090 Wien\\
  Austria}
\email{\mailto{katrin.grunert@univie.ac.at}}
\urladdr{\url{http://www.mat.univie.ac.at/~grunert/}}

\author[H. Holden]{Helge Holden}
\address{Department of Mathematical Sciences\\
  Norwegian University of Science and Technology\\
  7491 Trondheim\\ Norway\\ {\rm and} Centre of
  Mathematics for Applications\\ University of Oslo\\
  NO-0316 Oslo\\ Norway}
\email{\mailto{holden@math.ntnu.no}}
\urladdr{\url{http://www.math.ntnu.no/~holden/}}

\author[X. Raynaud]{Xavier Raynaud}
\address{Centre of Mathematics for Applications\\
  University of Oslo\\ NO-0316 Oslo\\ Norway}
\email{\mailto{xavierra@cma.uio.no}}
\urladdr{\url{http://folk.uio.no/xavierra/}}

\date{\today} \thanks{Research supported by the
  Research Council of Norway under Project
  No.~195792/V11, Wavemaker, NoPiMa, and by the Austrian Science Fund (FWF) under Grant No.~Y330.}  
\subjclass[2010]{Primary:
  35Q53, 35B35; Secondary: 35Q20}
\keywords{Camassa--Holm equation, Lipschitz
  metric, conservative solutions}

\begin{abstract}
  We study stability of solutions of the Cauchy
  problem on the line for the Camassa--Holm equation
  $u_t-u_{xxt}+3uu_x-2u_xu_{xx}-uu_{xxx}=0$ with
  initial data $u_0$.  In particular, we derive a
  new Lipschitz metric $d_\D$ with the
  property that for two solutions $u$ and $v$ of
  the equation we have
  $d_\D(u(t),v(t))\le e^{Ct}
  d_\D(u_0,v_0)$. The relationship between this metric and the usual norms in $H^1$ and $L^\infty$ is clarified. 
  The method extends to the generalized hyperelastic-rod equation 
  $u_t-u_{xxt}+f(u)_x-f(u)_{xxx}+(g(u)+\frac12 f''(u)(u_x)^2)_x=0$ (for $f$ without inflection points).
\end{abstract}
\maketitle

\section{Introduction}

The Cauchy problem for the Camassa--Holm (CH) equation \cite{CH, CHH},
\begin{equation}
  \label{eq:kappaCH}
  u_t-u_{txx}+\kappa u_x+3uu_x-2u_xu_{xx}-uu_{xxx}=0,
\end{equation}
where $\kappa\in\Real$ is a constant, has
attracted much attention due to the fact that it serves as a model for shallow water waves \cite{cla} and its rich
mathematical structure. For example,  it has a bi-Hamiltonian structure, 
infinitely many conserved quantities 
and blow-up phenomena have been studied, see, e.g., \cite{cons:98}, \cite{cons:98b}, and \cite{cons:00}. 

We here focus on the construction of the Lipschitz metric for the semigroup of conservative solutions on the real line. This problem has been recently considered by Grunert, Holden, and Raynaud \cite{GHR} in the
periodic case, and here we want to present how the
approach used there has to be modified 
in the non-periodic case.

For simplicity, we will only discuss the case
$\kappa=0$, that is,
\begin{equation}\label{eq:CH}
 u_t-u_{txx}+3uu_x-2u_xu_{xx}-uu_{xxx}=0, 
\end{equation}
and from now on we refer to \eqref{eq:CH} as the
CH equation. However, the approach presented here
can also handle the generalized hyperelastic-rod
equation, see Remark \ref{rem:genhyp}. In particular, it includes the case with nonzero $\kappa$. The
generalized hyperelastic-rod equation has been
introduced in \cite{HolRay:06a}. It is given by
\begin{equation}
  \label{eq:genhyper}
  u_t-u_{xxt}+f(u)_x-f(u)_{xxx}+(g(u)+\frac12 f''(u)(u_x)^2)_x=0   
\end{equation}
where $f$ and $g$ are smooth functions.\footnote{In addition, the function $f$ is assumed to be strictly convex or concave.} With
$f(u)=\frac{u^2}2$ and $g(u)=\kappa u+u^2$, we
recover \eqref{eq:kappaCH} for any $\kappa$.  With
$f(u)=\frac{\gamma u^2}2$ and
$g(u)=\frac{3-\gamma}{2}u^2$, we obtain the
hyperelastic-rod wave equation:
\begin{equation*}
u_t-u_{txx}+3uu_x-\gamma(2u_xu_{xx}+uu_{xxx})=0,
\end{equation*}
which has been introduced by Dai
\cite{Dai_exact:98,Dai:98,Dai:2000}.

Equation \eqref{eq:CH} can be rewritten as the
following system
\begin{align}
 u_t+uu_x+P_x&=0,\\ 
P-P_{xx}=u^2+&\frac{1}{2}u_x^2,
\end{align}
where we choose $u$ to be an element of
$H^1(\Real)$. The $H^1$ norm is preserved as
\begin{equation}
  \label{eq:presH1}
 \frac{d}{dt}\norm{u(t)}_{H^1(\Real)}^2=\frac{d}{dt}\int_\Real (u^2+u_x^2)dx=0, 
\end{equation}
for any smooth solution $u$. However, even for
smooth initial data, the solution may break down
in finite time. In this case, the solution
experiences wave breaking (\cite{CHH,cons:98}):
The solution remains bounded while, at some point,
the spatial derivative $u_x$ tends to
$-\infty$. This phenomenon can be nicely
illustrated by the so called multipeakon
solutions. These are solutions of the form
\begin{equation}
\label{eq:chP}
u(t,x)=\sum_{i=1}^{n} p_i(t)e^{-\abs{x-q_i(t)}}.
\end{equation}
Let us consider the case with $n=2$ and one peakon
$p_1(0)>0$ (moving to the right) and one
antipeakon $p_2(0)<0$ (moving to the left). In the
symmetric case ($p_1(0)=-p_2(0)$ and
$q_1(0)=-q_2(0)<0$) the solution $u$ will vanish
pointwise at the collision time $t^*$ when
$q_1(t^*)=q_2(t^*)$, that is, $u(t^*,x)=0$ for all
$x\in \Real$. At time $t=t^*$, the whole energy is
concentrated at the origin, and we have $\lim_{t\to
  t^*}(u^2+u_x^2)\,dx=\norm{u_0}_{H^1}\delta$, with $\delta$ denoting the Dirac delta distribution at the origin. In
general we have two possibilities to continue the
solution beyond wave breaking, namely to set $u$
identically equal to zero for $t>t^\star$, which is called a
dissipative solution, or to let the peakons pass
through each other, which is called a conservative
solution and which is depicted in Figure
\ref{fig:peakcol}. We are interested in the latter
case, for which solutions have been studied by
Bressan and Constantin \cite{BC} and Holden and
Raynaud \cite{HR,HolRay:06b}.  Since the
$H^1$-norm is preserved, the space $H^1$ appears
as a natural space for the semigroup of
solutions. However, the previous multipeakon
example reveals the opposite. Indeed, $u(t^*,x)=0$
for all $x\in\Real$. Thus, the trivial solution $\bar u$, that is, $\bar
u(t,x)=0$ for all $t,x\in\Real$, which is also a
conservative solution, coincides with $u$ at $t=t^*$. To define a semigroup of
conservative solutions, we therefore need more information about the solution than just its pointwise values, for instance, 
the amount and location of the energy which
concentrates on sets of zero measure.  This
justifies the introduction of the set $\D$ of Eulerian coordinates, see
Definition \ref{def:D}, for which a semigroup can
be constructed \cite{HR}.

Furthermore, the $H^1$ norm is not well suited to establish a
stability result. Consider, e.g., the sequence of
multipeakons $u^\epsi$ defined as $u^\epsi (t,x)
=u(t-\epsi,x)$, see
Figure~\ref{fig:peakcol}. Then, assuming that
$\norm{u(0)}_{H^1(\Real)}=1$, we have
\begin{equation}\nn
 \lim_{\epsi\to 0}\norm{u(0)-u^\epsi(0)}_{H^1(\Real)}=0, \quad \text{and} \quad \norm{u(t^\star)-u^\epsi(t^\star)}_{H^1(\Real)}=\norm{u^\epsi(t^\star)}_{H^1(\Real)}=1, 
\end{equation}
so that the flow is clearly discontinuous with
respect to the $H^1$ norm.

\begin{figure}
  \includegraphics[width=4cm]{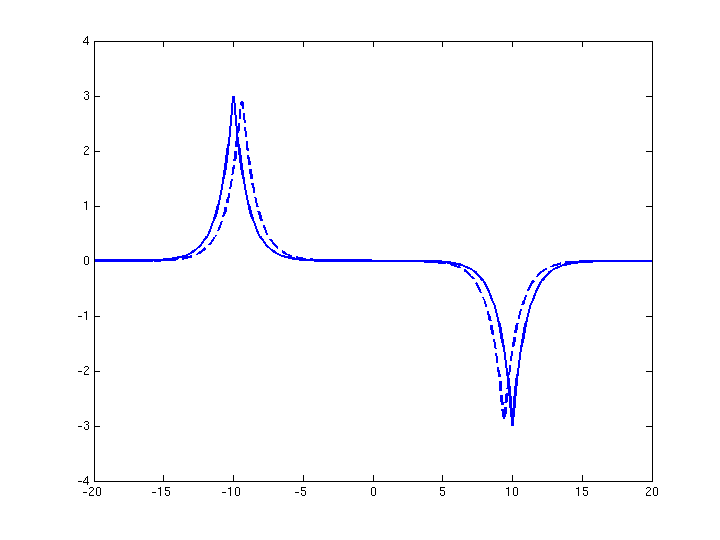}
  \includegraphics[width=4cm]{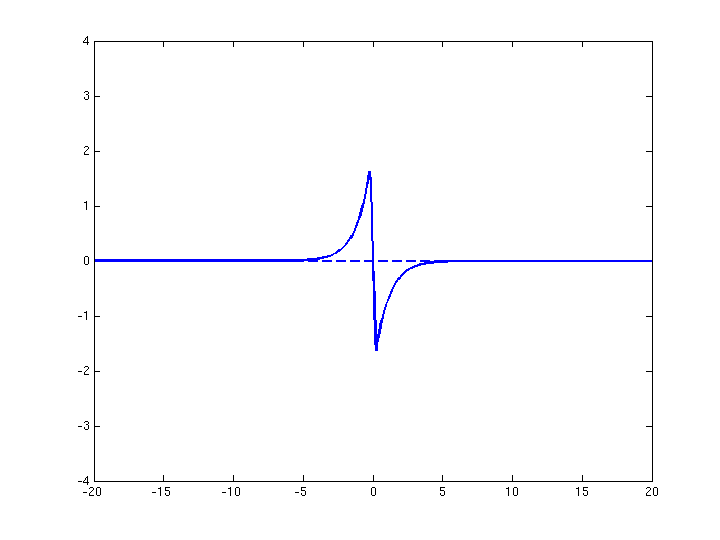}
  \includegraphics[width=4cm]{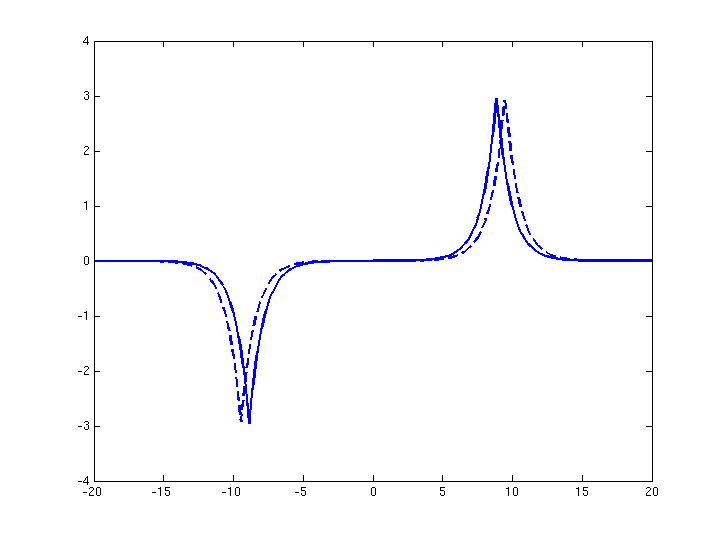}
  \label{fig:peakcol}
  \caption{The dashed curve depicts the
    antisymmetric multipeakon solution $u(t,x)$,
    which vanishes at $t^*$, for $t=0$ (left) and
    $t=t^*$ (middle) and $t=2t^*$ (right). The
    solid curve depicts the multipeakon solution
    given by $u^\epsi(t,x)=u(t-\epsi,x)$ for some
    small $\epsi>0$ (the CH equation is invariant
    with respect to time translation).}
\end{figure}

The aim of this article is to present a metric for
which the semigroup of conservative solutions on the line is
Lipschitz continuous. A more extensive discussion about Lipschitz continuity with examples from ordinary differential equations, can be found in \cite{BHR}. A detailed presentation for the Camassa--Holm equation in the periodic case is presented in \cite{GHR}, thus we here focus on explaining the differences between the periodic case and the decaying case. However, we first present the general construction.

The construction of the
metric is closely connected to the construction of
the semigroup itself. Let us outline this
construction. We rewrite the CH equation in
Lagrangian coordinates and obtain a semilinear
system of ordinary differential equations: Let
$u(t,x)$ denote the solution and $y(t,\xi)$ the
corresponding characteristics, thus
$y_t(t,\xi)=u(t,y(t,\xi))$. Then our new variables
are $y(t,\xi)$, as well as 
\begin{equation}
 U(t,\xi)=u(t,y(t,\xi)), \quad H(t,\xi)=\int_{-\infty}^{y(t,\xi)}(u^2+u_x^2)dx,  
\end{equation}
where $U$ corresponds to the Lagrangian velocity
while $H$ can be interpreted as the Lagrangian
cumulative energy distribution.  The time
evolution for any $X=(y,U,H)$ is described by
\begin{equation}
  \label{equivsystem}
  \begin{aligned}  y_t &= U,\\
    U_t &= -Q,\\ H_t &= U^3-2PU,
  \end{aligned}
\end{equation}
where 
\begin{equation}\label{repPint}
  P(t,\xi)=\frac{1}{4} \int_{\Real} \exp(-\vert y(t,\xi)-y(t,\eta)\vert ) (U^2y_\xi +H_\xi)(t,\eta)d\eta,
\end{equation}
and
\begin{equation}\label{repQint}
  Q(t,\xi)=-\frac{1}{4} \int_\Real \sign(y(t,\xi)-y(t,\eta)) \exp(-\vert y(t,\xi)-y(t,\eta)\vert ) (U^2y_\xi +H_\xi)(t,\eta)d\eta.
\end{equation}
This system is well-posed as a locally Lipschitz
system of ordinary differential equations in a Banach space, and we can define
a semigroup of solution which we denote
$S_t$. From standard  theory for ordinary differential equations we know that
$S_t$ is locally Lipschitz continuous, that is,
given $T$ and $M$,
\begin{equation}
  \label{eq:lipst}
  \norm{S_t(X_\alpha)-S_t(X_\beta)}\leq C_{M,T} \norm{X_\alpha-X_\beta}
\end{equation}
for any $t\in[0,T]$, $X_\alpha,X_\beta\in B_M=\{X\
|\ \norm{X}\leq M\}$ and where the constant
$C_{M,T}$ depends only on $M$ and $T$.

The mapping from Lagrangian to Eulerian
coordinates is surjective but not bijective. The
discrepancy between the two sets of coordinates is
due to the freedom of relabeling in Lagrangian
coordinates. The relabeling functions form a
group, which we denote $\Gr$, and which basically
consists of the diffeomorphisms of the line with
some additional assumptions (see Definition
\ref{def:Gr}). Given $X=(y,U,H)$, the element
$X\circ f=(y\circ f,U\circ f,H\circ f)$ is the
relabeled version of $X$ by the relabeling
function $f\in\Gr$. Using the fact that the
semigroup $S_t$ is equivariant with respect to
relabeling, that is,
\begin{equation}
  \label{eq:Strelinv}
  S_t(X\circ f)=S_t(X)\circ f,
\end{equation}
we can construct a semigroup of solutions on
equivalence classes from $S_t$. Finally, after
establishing the existence of a bijection between
the Eulerian coordinates and the equivalence
classes in Lagrangian coordinates, we can
transport the semigroup of solutions defined on equivalence classes and
construct a semigroup, which we denote $T_t$, of
conservative solutions in $\D$.

We want to find a metric which makes $T_t$
Lipschitz continuous. For that purpose, we
introduce a pseudometric\footnote{By a pseudometric
  we mean a map $d\colon X\times X\to[0,\infty)$ which is symmetric, $d(x,y)=d(y,x)$, for which the triangle inequality $d(x,y)\le d(x,z)+d(z,y)$ holds, and satisfies $d(x,x)=0$ for $x,y,z\in X$.} in
Lagrangian coordinates which does not distinguish
between elements of the same equivalence class and
which, at the same time, leaves the semigroup
$S_t$ locally Lipschitz continuous. This strategy
has been used in \cite{BHR} for the Hunter--Saxton
equation and in \cite{GHR} for the Camassa--Holm
equation in the periodic case. In \cite{BHR}, the
pseudometric is defined by using ideas from
Riemannian geometry. Here, we follow the approach
of \cite{GHR} and first introduce a
pseudosemimetric\footnote{By a pseudosemimetric we mean a map $d\colon X\times X\to[0,\infty)$ which is symmetric, $d(x,y)=d(y,x)$ and satisfies  $d(x,x)=0$  for $x,y\in X$.} which also identifies
elements of the same equivalence class and leaves
$S_t$ Lipschitz continuous. A natural choice,
which was applied in \cite{GHR}, is to consider the
pseudometric $\tilde J$ defined as
\begin{equation}
  \label{eq:defJt}
  \tilde J(X_\alpha,X_\beta)=\inf_{f,g\in\Gr}\norm{X_\alpha\circ f-X_\beta\circ g}.
\end{equation}
The pseudometric $\tilde J$ identifies elements of
the same equivalence class, as $\tilde J(X,X\circ
f)=0$. Moreover, it is invariant with respect to
relabeling, that is, $\tilde J(X_\alpha\circ
f,X_\beta\circ g)=\tilde J(X_\alpha,X_\beta)$ for
any $f,g\in\Gr$. It remains to prove that the
pseudosemimetric makes the semigroup $S_t$ locally
Lipschitz, that is, given $M$ and $T$, there exists
a constant $C$ depending on $M$ and $T$ such that
\begin{equation}
  \label{eq:stabjt}
  \tilde J(S_tX_\alpha,S_tX_\beta)\leq C\tilde J(X_\alpha,X_\beta),
\end{equation}
for all $t\in[0,T]$ and $X_\alpha,X_\beta\in
B_M$. The proof follows almost directly from the
stability and equivariance of $S_t$. We outline it
here. For every $\epsi>0$, there exist $f,g\in\Gr$
such that $\tilde
J(X_\alpha,X_\beta)\geq\norm{X_\alpha\circ
  f-X_\beta\circ g}-\epsi$ and we get
\begin{subequations}
  \label{eq:notusstab}
  \begin{align}
    \tilde J(S_tX_\alpha,S_tX_\beta)&\leq\norm{S_t(X_\alpha)\circ f-S_t(X_\beta)\circ g}&\\
    &=\norm{S_t(X_\alpha\circ f)-S_t(X_\beta\circ
      g)}&
    \text{ (as $S_t$ is equivariant)}\\
    \label{eq:notusstab3}
    &\leq C_M\norm{X_\alpha\circ f-X_\beta\circ g}&\text{ (by \eqref{eq:lipst})}\\
    &\leq C_M(\tilde J(X_\alpha,X_\beta)+\epsi)&
  \end{align}
\end{subequations}
and \eqref{eq:stabjt} follows by letting $\epsi$
tend to zero. However, the use of the Lipschitz
stability of $S_t$ \eqref{eq:lipst} relies on
bounds on $\norm{X_\alpha\circ f}$ and
$\norm{X_\beta\circ g}$ that are 
unavailable. The problem is that the norm
$\norm{\dott}$ of the Banach space is \textit{not}
invariant with respect to relabeling and
therefore, since $f$ and $g$ are a priori
arbitrary, we cannot obtain any bound depending on
$M$ for $\norm{X_\alpha\circ f}$ and
$\norm{X_\beta\circ g}$. This motivates the
introduction in this paper of the pseudosemimetric
$J$ defined as
\begin{equation}
  J(X_\alpha,X_\beta)=\inf_{f_1,f_2\in\Gr}\big(\norm{X_\alpha\circ f_1-X_\beta}+\norm{X_\alpha-X_\beta\circ f_2}\big).
\end{equation}
As expected, the pseudosemimetric $J$ identifies
equivalence classes (we have $J(X,X\circ f)=0$)
but we lose the nice relabeling invariance
property. At the same time, this definition of $J$
implies some implicit restrictions on the
diffeomorphisms $f_1$ and $f_2$ which allow us to
bound the relabeled versions $\norm{X_\alpha\circ
  f_1}$ and $\norm{X_\beta\circ f_2}$ so that the
approach sketched in \eqref{eq:notusstab} can be
carried out.

It remains to explain why, in the periodic case
\cite{GHR}, we could use the definition of $\tilde
J$, which is a more natural definition and
moreover simplifies the proofs. In the periodic
case (we take the period equal to one), the
stability of the semigroup $S_t$ is established in
the space $W^{1,1}([0,1])$ equipped with the norm
\begin{equation}
  \label{eq:normW11}
  \norm{U}_{W^{1,1}([0,1])}=\norm{U}_{L^\infty([0,1])}+\norm{U_\xi}_{L^1([0,1])}.
\end{equation}
Note that, in order to keep these formal
explanations as simple as possible, we just
consider the second component of
$X=(y,U,H)$. Since $\norm{U\circ
  f}_{L^\infty}=\norm{U}_{L^\infty}$ and
\begin{equation*}
  \norm{U\circ
    f}_{L^1}=\int_0^1U\circ ff_\xi\,d\xi=\norm{U}_{L^1}
\end{equation*}
we have $\norm{U\circ
  f}_{W^{1,1}}=\norm{U}_{W^{1,1}}$, for any
$f\in\Gr$, so that the norm defined in
\eqref{eq:normW11} is relabeling
invariant. Now, if
the norm of the Banach space is relabeling
invariant, we have
\begin{equation}
  \label{eq:JJtequiv}
  \tilde J(X_\alpha,X_\beta)\leq J(X_\alpha,X_\beta)\leq 2\tilde J(X_\alpha,X_\beta),
\end{equation}
and the pseudosemimetrics $J$ and $\tilde J$ are
equivalent. However, the natural Banach space for
$U$ is not $W^{1,1}([0,1])$ but
$W^{1,2}([0,1])=H^1([0,1])$. In the periodic case,
it is not an issue as $H^1([0,1])\subset
W^{1,1}([0,1])$ but the corresponding embedding
does not hold in the case of the real line. This
also shows that the approach that we present here
for the real line can also be used in the periodic
case and that the novelty in this article is that
we handle a norm which is not relabeling
invariant.

The final step consists of deriving a pseudometric
from the pseudosemimetric $J$. This can be
achieved by the following general construction: Let
\begin{equation*}
  d(X_\alpha,X_\beta)=\inf \sum_{i=1}^NJ(X_{n-1},X_n),
\end{equation*}
where the infimum is taken over all finite sequences
$\{X_n\}_{n=0}^N$ with $X_0=X_\alpha$ and
$X_N=X_\beta$.  The pseudometric $d$ inherits the
Lipschitz stability property \eqref{eq:stabjt} from 
$J$. Finally, identifying elements belonging to the same equivalence class, 
the pseudometric $d$ turns into a metric on the set
of equivalence classes. By bijection, it yields a
metric in $\D$ which makes the semigroup of
conservative solutions Lipschitz continuous.

In the last section, Section \ref{sec:top}, we compare this new
metric with the usual norms in $H^1$ and
$L^\infty$.

\section{Semigroup of solutions in Lagrangian
  coordinates}
\label{sec:semilag}

In this section, we recall from \cite{HR} the
construction of the semigroup in Lagrangian
coordinates. The Camassa--Holm equation reads
\begin{equation}
  u_t-u_{xxt}+3uu_x-2u_xu_{xx}-uu_{xxx}=0,
\end{equation}
and can be rewritten as the following system
\begin{equation}
  u_t+uu_x+P_x=0,
\end{equation}
\begin{equation}\label{eq:P}
  P-P_{xx}=u^2+\frac{1}{2}u_x^2.
\end{equation}

Next, we rewrite the equation in Lagrangian
coordinates. Therefore we introduce the
characteristics
\begin{equation}
  y_t(t,\xi)=u(t,y(t,\xi)).
\end{equation}
The Lagrangian velocity $U$ reads
\begin{equation}
  U(t,\xi)=u(t,y(t,\xi)).
\end{equation}
We define the Lagrangian cumulative energy as
\begin{equation}
  H(t,\xi)=\int_{-\infty}^{y(t,\xi)} (u^2+u_x^2) dx.
\end{equation}
As an immediate consequence of the definition of
the characteristics we obtain
\begin{equation}
  U_t(t,\xi)=u_t(t,y)+y_t(t,\xi)u_x(t,y)=-P_x\circ y(t,\xi).
\end{equation}
The last term can be expressed uniquely in terms
of $y$, $U$, and $H$. From \eqref{eq:P} we obtain
the following explicit expression for $P$,
\begin{equation}
  P(t,x)=\frac{1}{2} \int_{\Real} e^{-\vert x-z \vert} \Big(u^2(t,z)+\frac{1}{2} u_x^2 (t,z)\Big)dz.
\end{equation}
Setting $Q(t,\xi)=P_x(t,y(t,\xi))$ and writing
$P(t,\xi)=P(t,y(t,\xi))$, we obtain
\begin{equation}\label{repP}
  P(t,\xi)=\frac{1}{4} \int_{\Real} \exp(-\vert y(t,\xi)-y(t,\eta)\vert ) (U^2y_\xi +H_\xi)(t,\eta)d\eta,
\end{equation}
and
\begin{equation}\label{repQ}
  Q(t,\xi)=-\frac{1}{4} \int_\Real \sign(y(t,\xi)-y(t,\eta)) \exp(-\vert y(t,\xi)-y(t,\eta)\vert ) (U^2y_\xi +H_\xi)(t,\eta)d\eta.
\end{equation}
Moreover we introduce another variable $\zeta
(t,\xi)=y(t,\xi)-\xi$.  Thus we have derived a new
system of equations, which is up to that point
only formally equivalent to the Camassa--Holm
equation:
\begin{equation}
  \label{equivsys}
  \begin{aligned}  \zeta_t &= U,\\
    U_t &= -Q,\\ H_t &= U^3-2PU.
  \end{aligned}
\end{equation}
Let $V$ be the Banach space defined by
\begin{equation*}
V=\{f\in C_b(\Real) \ |\ f_\xi\in L^2\}
\end{equation*}
where $C_b(\Real)=C(\Real)\cap L^\infty$ and the
norm of $V$ is given by
$\norm{f}_V=\norm{f}_{L^\infty}+\norm{f_\xi}_{L^2}$. Of
course $H^1\subset V$ but the converse is not
true as $V$ contains functions that do not vanish
at infinity. We will employ the Banach space $E$
defined by
\begin{equation*}
E=V\times H^1\times V
\end{equation*}
with the following norm $\norm{X}=\norm{\zeta}_{V}+\norm{U}_{H^1(\Real)}+\norm{H}_{V}$ for any $X=(\zeta, U, H)\in E$.

\begin{definition}
\label{def:F}
The set $\G$ is composed of all $(\zeta,U,H)\in E$
such that
\begin{subequations}
\label{eq:lagcoord}
\begin{align}
\label{eq:lagcoord1}
&(\zeta,U,H)\in \left[W^{1,\infty}\right]^3,\\
\label{eq:lagcoord2}
&y_\xi\geq0, H_\xi\geq0, y_\xi+H_\xi>0 
\text{  almost everywhere, and  }\lim_{\xi\rightarrow-\infty}H(\xi)=0,\\
\label{eq:lagcoord3}
&y_\xi H_\xi=y_\xi^2U^2+U_\xi^2\text{ almost everywhere},
\end{align}
\end{subequations}
where we denote $y(\xi)=\zeta(\xi)+\xi$.
\end{definition}
Given a constant $M>0$, we denote by $B_M$ the
ball
\begin{equation}
  \label{eq:defBM}
  B_M=\{X\in E\ |\ \norm{X}\leq M\}.
\end{equation}
\begin{theorem}
\label{th:global}
For any $\bar X=(\bar y,\bar U,\bar H)\in\G$, the
system \eqref{equivsys} admits a unique global
solution $X(t)=(y(t),U(t),H(t))$ in
$C^1(\Real_+,E)$ with initial data $\bar X=(\bar
y,\bar U,\bar H)$. We have $X(t)\in\G$ for all
times. If we equip $\G$ with the topology induced
by the $E$-norm, then the mapping
$S\colon\G\times\Real_+\to\G$ defined by
\begin{equation*}
S_t(\bar X)=X(t)
\end{equation*}
is a continuous semigroup. More precisely, given
$M>0$ and $T>0$, there exists a constant $C_M$
which depends only on $M$ and $T$ such that, for
any two elements $X_\alpha,X_\beta\in\G\cap B_M$,
we have
\begin{equation}
  \label{eq:stabSt}
  \norm{S_tX_\alpha-S_tX_\beta}\leq C_M\norm{X_\alpha-X_\beta}
\end{equation}
for any $t\in[0,T]$.
\end{theorem}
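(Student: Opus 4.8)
The plan is to treat the system \eqref{equivsys} as an abstract ordinary differential equation $X_t=F(X)$ on the Banach space $E$, where $F(X)=(U,-Q,U^3-2PU)$ with $P$ and $Q$ given by \eqref{repP} and \eqref{repQ}, and to apply the contraction mapping principle. Since this construction is carried out in detail in \cite{HR}, I would mainly recall its structure. The first and central step is to establish that $F$ maps a bounded neighborhood in $\G$ into $E$ and is locally Lipschitz: given $M$, there is a constant $C_M$ such that $\norm{F(X_\alpha)-F(X_\beta)}\le C_M\norm{X_\alpha-X_\beta}$ for all $X_\alpha,X_\beta\in\G\cap B_M$.

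The heart of this estimate is the analysis of the integral operators $P$ and $Q$. First I would show that $Q\in H^1(\Real)$ and $U^3-2PU\in V$, so that $F$ indeed takes values in $E$. The kernel $e^{-\vert y(\xi)-y(\eta)\vert}$ is bounded by one, and, writing $y=\zeta+\xi$, the linear growth of the $\xi$-term supplies the decay needed to control the integrals; the nonnegative weight $U^2y_\xi+H_\xi$ is integrable, with $\int_\Real(U^2y_\xi+H_\xi)\,d\eta$ controlled by the total energy $H(\infty)$ and hence by $M$. For the Lipschitz bounds I would compare $P_\alpha-P_\beta$ and $Q_\alpha-Q_\beta$ by splitting each difference into the difference of the kernels and the difference of the weights, using the elementary estimates $\vert e^{-a}-e^{-b}\vert\le\vert a-b\vert$ and $\big|\,\vert y(\xi)-y(\eta)\vert-\vert\tilde y(\xi)-\tilde y(\eta)\vert\,\big|\le\vert\zeta(\xi)-\tilde\zeta(\xi)\vert+\vert\zeta(\eta)-\tilde\zeta(\eta)\vert$, and then bounding each piece by $\norm{X_\alpha-X_\beta}$ with constants depending on $M$.

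Once $F$ is locally Lipschitz, the Banach fixed-point theorem (Picard iteration) yields a unique local solution $X\in C^1([0,T_0],E)$ with $T_0$ depending on $\norm{\bar X}$. Next I would verify that the constraints defining $\G$ are propagated by the flow: differentiating \eqref{eq:lagcoord1}--\eqref{eq:lagcoord3} in $\xi$ gives a linear system of ordinary differential equations for $(y_\xi,U_\xi,H_\xi)$ whose structure shows that the quantity $y_\xi H_\xi-y_\xi^2U^2-U_\xi^2$ satisfies a linear ODE preserving the value zero, and that the signs $y_\xi\ge0$, $H_\xi\ge0$ are maintained, while $\lim_{\xi\to-\infty}H=0$ is also preserved. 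For global existence I would derive an a priori bound of the form $\frac{d}{dt}\norm{X}\le C(1+\norm{X})$, using that the total energy is conserved along the flow (since $U^3-2PU\to0$ as $\xi\to\infty$), so that Gronwall's inequality prevents blow-up of the $E$-norm on any finite interval and the local solution extends globally.

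Finally, the semigroup property $S_{t+s}=S_t\circ S_s$ is immediate from uniqueness. For the stability estimate \eqref{eq:stabSt}, given $X_\alpha,X_\beta\in\G\cap B_M$ I would use the global bounds to keep both solutions in a ball $B_{M'}$ with $M'$ depending on $M$ and $T$, and then combine the local Lipschitz property of $F$ with Gronwall to obtain $\norm{S_tX_\alpha-S_tX_\beta}\le e^{C_{M'}t}\norm{X_\alpha-X_\beta}$, which gives \eqref{eq:stabSt} on $[0,T]$. The main obstacle throughout is the first step: the operators $P$ and $Q$ are nonlinear in $y$ through the exponential kernel, and the mismatch between the $H^1$-norm on $U$ and the weaker $V$-norm on $\zeta$ and $H$ (which permits functions not decaying at infinity) makes the mapping and Lipschitz properties delicate to establish.
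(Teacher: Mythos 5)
Your proposal follows essentially the same route as the paper, which simply invokes the local Lipschitz continuity of $F$ from \cite[Theorem 2.3]{HR}, the a priori bound on $\sup_{t\in[0,T]}\norm{S_t(X)}$ from \cite[Theorem 2.8]{HR}, and Gronwall's lemma; your sketch just expands the steps that the paper delegates to \cite{HR} (mapping and Lipschitz properties of $P$ and $Q$, Picard iteration, propagation of the constraints \eqref{eq:lagcoord}, energy conservation for globality). The only point to watch if you were to write out the details is the Lipschitz estimate for $Q$: the factor $\sign(y(\xi)-y(\eta))$ is discontinuous, so the pointwise kernel comparison you describe does not apply verbatim there and one must exploit the monotonicity of $y$ as in \cite{HR}.
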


\begin{proof}
  We have
  \begin{equation*}
    X_t=F(X)
  \end{equation*}
  where $F$ is locally Lipschitz (see proof
  \cite[Theorem 2.3]{HR}). It implies, by
  Gronwall's lemma, that we have
  \begin{equation*}
    \norm{S_t(X_\alpha)-S_t(X_\beta)}\leq C\norm{X_\alpha-X_\beta}
  \end{equation*}
  for $t\in[0,T]$, where the constant $C$ only
  depends on
  $\sup_{t\in[0,T]}\norm{S_t(X_\alpha)}$ and
  $\sup_{t\in[0,T]}\norm{S_t(X_\beta)}$. In
  \cite[Theorem 2.8]{HR} it is proved that
  $\sup_{t\in[0,T]}\norm{S_t(X_\alpha)}$ only
  depends of $\norm{X_\alpha}$ and $T$,  and thus
  on $M$ and $T$.
\end{proof}

\begin{definition}
  \label{def:Gr}
We denote by $\Gr$ the
subgroup of the group of homeomorphisms from
$\Real$ to $\Real$ such that
\begin{subequations}
\label{eq:Hcond}
\begin{align}
 \label{eq:Hcond1}
f-\id\text{ and }f^{-1}-\id &\text{ both belong to }W^{1,\infty}(\Real),\\ 
\label{eq:Hcond2}
f_\xi-1& \text{ belongs to }L^2(\Real),
\end{align}
\end{subequations}
where $\id$ denotes the identity function. Given
$\kappa>0$, we denote by $\Gr_\kappa$ the
subset $\Gr_\kappa$ of $\Gr$ defined by
\begin{equation*}
  \Gr_\kappa=\{f\in\Gr\ |\ \norm{f-\id}_{\Winf}+\norm{f^{-1}-\id}_{\Winf}\leq\kappa\}.
\end{equation*} 
\end{definition}
The subsets $\Gr_\kappa$ do not possess  the group structure
of $G$.  The next lemma provides a useful
characterization of $\Gr_\kappa$.
\begin{lemma}[{\cite[Lemma 3.2]{HR}}] 
\label{lem:charH}
Let $\kappa\geq0$. If $f$ belongs to $\Gr_\kappa$,
then $1/(1+\kappa)\leq f_\xi\leq 1+\kappa$ almost
everywhere. Conversely, if $f$ is absolutely
continuous, $f-\id\in\Winf$, $f$ satisfies
\eqref{eq:Hcond2} and there exists $c\geq 1$ such
that $1/c\leq f_\xi\leq c$ almost everywhere, then
$f\in\Gr_\kappa$ for some $\kappa$ depending only
on $c$ and $\norm{f-\id}_{\Winf}$.
\end{lemma}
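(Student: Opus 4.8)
The plan is to treat the two implications separately, in both cases exploiting that membership in $\Gr$ forces $f$ and $f^{-1}$ to be Lipschitz, so that the essential bounds on the derivatives coincide with the respective Lipschitz constants. Note first that every $f\in\Gr$ is increasing, since $f-\id$ bounded forces $f(\xi)\to\pm\infty$ as $\xi\to\pm\infty$, ruling out the decreasing branch of a homeomorphism of $\Real$.

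For the forward implication, assume $f\in\Gr_\kappa$. Since the two nonnegative summands in the definition of $\Gr_\kappa$ add up to at most $\kappa$, we have in particular $\norm{f-\id}_{\Winf}\leq\kappa$ and $\norm{f^{-1}-\id}_{\Winf}\leq\kappa$. The first gives $\norms{f_\xi-1}_{L^\infty}\leq\kappa$, hence $f_\xi\leq 1+\kappa$ almost everywhere, which is the upper bound. For the lower bound I would avoid the naive estimate $f_\xi\geq 1-\kappa$ (useless once $\kappa\geq1$) and instead use $f^{-1}$: from $\norms{(f^{-1})_\xi-1}_{L^\infty}\leq\kappa$ the map $f^{-1}$ is Lipschitz with constant at most $1+\kappa$. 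For $\xi$ a point of differentiability of $f$ and $h>0$, applying this to the pair $f(\xi+h),f(\xi)$ yields $h\leq(1+\kappa)(f(\xi+h)-f(\xi))$, so the difference quotient is bounded below by $1/(1+\kappa)$; letting $h\to0$ gives $f_\xi\geq 1/(1+\kappa)$ almost everywhere.

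For the converse, the first task is to check that $f$ is a homeomorphism of $\Real$. Absolute continuity together with $f_\xi\geq 1/c>0$ makes $f$ strictly increasing, and boundedness of $f-\id$ forces $f(\xi)\to\pm\infty$ as $\xi\to\pm\infty$, so $f$ is a continuous increasing bijection, hence a homeomorphism. Next I would bound $f^{-1}$. Writing $y=f(\xi)$, the identity $f^{-1}(y)-y=-(f(\xi)-\xi)$ gives $\norm{f^{-1}-\id}_{L^\infty}=\norm{f-\id}_{L^\infty}$. Integrating $1/c\leq f_\xi\leq c$ yields $(b-a)/c\leq f(b)-f(a)\leq c(b-a)$ for $a<b$, which under the substitution $y_i=f(\xi_i)$ transfers to $(y_2-y_1)/c\leq f^{-1}(y_2)-f^{-1}(y_1)\leq c(y_2-y_1)$; thus $f^{-1}$ is Lipschitz with $1/c\leq(f^{-1})_\xi\leq c$ almost everywhere, whence $\norms{(f^{-1})_\xi-1}_{L^\infty}\leq c-1$. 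Therefore $f^{-1}-\id\in\Winf$, condition \eqref{eq:Hcond2} holds by hypothesis, and so $f\in\Gr$. Collecting the estimates, $\norm{f-\id}_{\Winf}+\norm{f^{-1}-\id}_{\Winf}\leq 2\norm{f-\id}_{\Winf}+(c-1)=:\kappa$, which depends only on $c$ and $\norm{f-\id}_{\Winf}$, giving $f\in\Gr_\kappa$.

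The only genuinely delicate point is the passage between essential derivative bounds and pointwise (difference-quotient) control, needed for the lower bound on $f_\xi$ in the forward part and for the derivative bounds on $f^{-1}$ in the converse. The clean way around it is to work with Lipschitz constants directly, relying on the elementary facts that for a Lipschitz function $g$ on $\Real$ one has $\mathrm{Lip}(g)=\norms{g_\xi}_{L^\infty}$ and that monotone bi-Lipschitz maps are absolutely continuous with the stated two-sided integral estimate; this bypasses any appeal to an almost-everywhere chain rule for inverse functions.
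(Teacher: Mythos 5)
This lemma is quoted from \cite[Lemma 3.2]{HR} and the paper gives no proof of it, so there is nothing in the text to compare your argument against; I can only assess it on its own terms, and it is correct. Both halves are handled properly: the upper bound $f_\xi\le 1+\kappa$ is immediate from $\norm{f_\xi-1}_{L^\infty}\le\kappa$, and you rightly avoid the useless estimate $f_\xi\ge 1-\kappa$ by instead converting the bound $\norms{(f^{-1})_\xi-1}_{L^\infty}\le\kappa$ into the Lipschitz bound $\mathrm{Lip}(f^{-1})\le 1+\kappa$ and reading off $f_\xi\ge 1/(1+\kappa)$ from the difference quotients (this is exactly where the interplay between the two summands in the definition of $\Gr_\kappa$ is needed). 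In the converse, the chain: strict monotonicity from $f_\xi\ge 1/c$ and absolute continuity, surjectivity from boundedness of $f-\id$, the identity $\norm{f^{-1}-\id}_{L^\infty}=\norm{f-\id}_{L^\infty}$, and the transfer of the two-sided integral estimate $(b-a)/c\le f(b)-f(a)\le c(b-a)$ to $f^{-1}$, yielding $\norms{(f^{-1})_\xi-1}_{L^\infty}\le c-1$, is complete and produces an admissible $\kappa=2\norm{f-\id}_{\Winf}+(c-1)$; condition \eqref{eq:Hcond2} is indeed the only remaining requirement for membership in $\Gr$ and holds by hypothesis. Your closing remark correctly identifies the one genuinely delicate point, the identification of the essential supremum of the weak derivative with the Lipschitz constant for continuous $W^{1,\infty}(\Real)$ functions, and handles it by working with Lipschitz constants throughout rather than invoking a chain rule for $f^{-1}$, which is the standard and cleanest route.
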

We define the subsets $\F_\kappa$ and $\F$ of $\G$
as follows
\begin{equation*}
\F_\kappa=\{X=(y,U,H)\in\G\ |\ y+H\in \Gr_\kappa\},
\end{equation*}
and
\begin{equation*}
\F=\{X=(y,U,H)\in\G\ |\ y+H\in \Gr\}.
\end{equation*}
For $\kappa=0$, $\Gr_0=\{\id\}$. As we shall see,
the space $\F_0$ will play a special role. These
sets are relevant only because they are preserved
by the governing equation \eqref{equivsys} as the
next lemma shows. In particular, while the mapping
$\xi\mapsto y(t,\xi)$ may not be a diffeomorphism
for some time $t$, the mapping $\xi\mapsto
y(t,\xi)+H(t,\xi)$ remains a diffeomorphism for
all times $t$.
\begin{lemma} 
\label{lem:Fpres}
The space $\F$ is preserved by the governing
equation \eqref{equivsys}. More precisely, given
$\kappa,T\geq0$, there exists $\kappa'$ which only
depends on $T$, $\kappa$ and $\norm{\bar X}$
such that
\begin{equation*}
S_t(\bar X)\in\F_{\kappa'}
\end{equation*}
for any $\bar X\in\F_{\kappa}$.
\end{lemma}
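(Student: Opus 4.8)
The plan is to set $g=y+H$ and to show that $g(t)=y(t)+H(t)$ lies in $\Gr_{\kappa'}$ by verifying the hypotheses of the converse direction of Lemma~\ref{lem:charH}. Since $X(t)=S_t(\bar X)\in\G$ for all $t$ by Theorem~\ref{th:global}, we automatically have $g(t)-\id=\zeta(t)+H(t)\in\Winf$ and $g_\xi(t)-1=\zeta_\xi(t)+H_\xi(t)\in L^2(\Real)$, so $g(t)$ is absolutely continuous and the two structural conditions of Lemma~\ref{lem:charH} are already met. The content of the lemma is therefore reduced to producing a constant $c\ge1$, depending only on $T$, $\kappa$ and $\norm{\bar X}$, such that $1/c\le g_\xi(t,\xi)\le c$ for a.e.\ $\xi$ and all $t\in[0,T]$. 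Once this is in hand, the converse of Lemma~\ref{lem:charH} yields $g(t)\in\Gr_{\kappa'}$ with $\kappa'$ controlled by $c$ and by $\norm{g(t)-\id}_{\Winf}\le\norm{\zeta(t)}_{L^\infty}+\norm{H(t)}_{L^\infty}+(c+1)$, all of which are bounded in terms of $T$, $\kappa$ and $\norm{\bar X}$.

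To obtain the two-sided bound I would differentiate the system \eqref{equivsys} in $\xi$. Using the identities $P_\xi=Qy_\xi$ and $Q_\xi=Py_\xi-\frac12(U^2y_\xi+H_\xi)$, both read off from the representation formulas \eqref{repP}--\eqref{repQ} as in \cite{HR}, one finds that for a.e.\ fixed $\xi$ the quantity $g_\xi=y_\xi+H_\xi$ satisfies the scalar ordinary differential equation
\begin{equation*}
  (g_\xi)_t=U_\xi\,(1+3U^2-2P)-2QUy_\xi.
\end{equation*}
The crucial structural estimate is the pointwise bound $\abs{U_\xi}\le\frac12 g_\xi$, which follows directly from \eqref{eq:lagcoord3}: indeed $U_\xi^2=y_\xi H_\xi-y_\xi^2U^2\le y_\xi H_\xi\le\frac14(y_\xi+H_\xi)^2$. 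Combining this with $0\le y_\xi\le g_\xi$ gives the differential inequality
\begin{equation*}
  \abs{(g_\xi)_t}\le\Big(\tfrac12\big(1+3\norm{U}_{L^\infty}^2+2\norm{P}_{L^\infty}\big)+2\norm{Q}_{L^\infty}\norm{U}_{L^\infty}\Big)\,g_\xi=:C\,g_\xi.
\end{equation*}

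The coefficient $C$ involves only the sup-norms of $U$, $P$ and $Q$ over $t\in[0,T]$, and these are bounded in terms of $\sup_{t\in[0,T]}\norm{X(t)}$, which by Theorem~\ref{th:global} depends only on $\norm{\bar X}$ and $T$. A Gronwall argument applied pointwise in $\xi$ then gives
\begin{equation*}
  g_\xi(0,\xi)\,e^{-Ct}\le g_\xi(t,\xi)\le g_\xi(0,\xi)\,e^{Ct}.
\end{equation*}
Since $\bar X\in\F_\kappa$ means $\bar y+\bar H\in\Gr_\kappa$, the forward direction of Lemma~\ref{lem:charH} yields $1/(1+\kappa)\le g_\xi(0,\xi)\le 1+\kappa$, so taking $c=(1+\kappa)e^{CT}$ closes the two-sided bound and hence the proof.

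The main obstacle is the rigorous justification of differentiating \eqref{equivsys} in $\xi$ and of the pointwise-in-$\xi$ Gronwall step: a priori the solution is only known to be $C^1$ in time with values in $E$, so one must argue that $y_\xi$, $U_\xi$ and $H_\xi$ solve the stated ordinary differential equations for almost every fixed $\xi$ and that $g_\xi(\dott,\xi)$ is absolutely continuous in $t$. This regularity, together with the expressions for $P_\xi$ and $Q_\xi$, is precisely the kind of statement established in the well-posedness analysis of \cite{HR}, so I would invoke it rather than reprove it; everything else reduces to the elementary estimate above.
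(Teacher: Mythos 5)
Your proposal is correct and follows essentially the same route as the paper: both reduce the lemma to a two-sided pointwise bound on $y_\xi+H_\xi$ obtained by a Gronwall argument (forward and backward in time) on the $\xi$-differentiated system, with the key structural input $\abs{U_\xi}\le\sqrt{y_\xi H_\xi}\le\tfrac12(y_\xi+H_\xi)$ from \eqref{eq:lagcoord3}, and then conclude via the converse direction of Lemma~\ref{lem:charH}. The only cosmetic difference is that you close a scalar differential inequality for $g_\xi=y_\xi+H_\xi$ directly, whereas the paper applies Gronwall to $\abs{y_\xi}+\abs{H_\xi}+\abs{U_\xi}$ and then eliminates $\abs{U_\xi}$.
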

\begin{proof}
  Let $\bar X=(\bar y, \bar U , \bar
  H)\in\F_\kappa$, we denote by
  $X(t)=(y(t),U(t),H(t))$ the solution of
  \eqref{equivsys} with initial data $\bar X$ and
  set $h(t,\xi)=y(t,\xi)+H(t,\xi)$, $\bar
  h(\xi)=\bar y(\xi)+\bar H(\xi)$. By definition,
  we have $\bar h\in\Gr_\kappa$ and, from
  Lemma~\ref{lem:charH}, $1/c\leq \bar h_\xi\leq
  c$ almost everywhere, for some constant $c>1$
  depending only on $\kappa$. We consider a fixed
  $\xi$ and drop it in the notation. Applying
  Gronwall's inequality backward in time to
  \eqref{equivsys} we obtain
  \begin{equation}\label{eq:Fpres}
    \vert y_\xi(0)\vert +\vert H_\xi(0)\vert +\vert U_\xi(0)\vert \leq \E^{CT}(\vert y_\xi(t)\vert +\vert H_\xi(t)\vert +\vert U_\xi(t)\vert ), 
  \end{equation}
  for some constant $C$ which depends on
  $\norm{X(t)}_{C([0,T],E)}$, which itself depends
  only on $\norm{\bar X}$ and $T$. From
  \eqref{eq:lagcoord3}, we have
  \begin{equation}
    \vert U_\xi(t)\vert \leq \sqrt{y_\xi(t)H_\xi(t)}\leq \frac{1}{2}(y_\xi(t)+H_\xi(t)).
  \end{equation}
  Hence, since $y_\xi$ and $H_\xi$ are positive, \eqref{eq:Fpres} gives us 
  \begin{equation}
    \frac{1}{c}\leq \bar y_\xi+\bar H_\xi\leq \frac{3}{2}\E^{CT}(y_\xi(t)+H_\xi(t)), 
  \end{equation}
  and $h_\xi(t)=y_\xi(t)+H_\xi(t)\geq
  \frac{2}{3c}\E^{-CT}$. Similarly, by applying
  Gronwall's lemma forward in time, we obtain
  $y_\xi(t)+H_\xi(t)\leq\frac{3}{2}c\, \E^{CT}$. We
  have $\norm{(y+H)(t)-\xi}_{L^\infty(\Real)}\leq
  \norm{X(t)}_{C([0,T],E)}\leq C$ and
  $\norm{y_\xi+H_\xi-1}_{L^2}\leq\norm{\zeta_\xi}_{L^2}+\norm{H_\xi}_{L^2}\leq
  C$ for another constant $C$ which only depends
  on $\norm{\bar X}$ and $T$. Hence, applying
  Lemma~\ref{lem:charH}, we obtain that
  $y(t,\dott)+H(t,\dott)\in G_{\kappa^\prime}$ and
  therefore $X(t)\in F_{\kappa^\prime}$ for some
  $\kappa^\prime$ depending only on $\kappa$, $T$,
  and $\norm{\bar X}$.
\end{proof}

For the sake of simplicity, for any
$X=(y,U,H)\in\F$ and any function $f\in\Gr$, we
denote $(y\circ f,U\circ f,H\circ f)$ by $X\circ
f$. This operation corresponds to relabeling.

\begin{definition}
  We denote by $\Pi(X)$ the projection of $\F$
  into $\F_0$ defined as
  \begin{equation*}
    \Pi(X)=X\circ(y+H)^{-1}
  \end{equation*}
  for any $X=(y,U,H)\in\F$.
\end{definition}
The element $\Pi(X)$ is the unique relabeled
version of $X$ that belongs to $\F_0$.
\begin{lemma} 
  \label{lem:equivPi}
  The mapping $S_t$ is equivariant, that is, 
  \begin{equation*}
    S_t(X\circ f)=S_t(X)\circ f.
  \end{equation*}
\end{lemma}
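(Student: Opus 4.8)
The plan is to exploit the uniqueness statement in Theorem~\ref{th:global}. Fix $X=(y,U,H)\in\F$ and $f\in\Gr$, write $X(t)=S_t(X)$ for the solution of \eqref{equivsys} with data $X$, and set $Z(t)=X(t)\circ f=(y(t)\circ f,\,U(t)\circ f,\,H(t)\circ f)$. Since $f\in\Gr$ is an admissible relabeling, the relabeled datum $X\circ f$ again lies in $\F\subset\G$, so $S_t(X\circ f)$ is well defined. I would show that $Z$ is itself a solution of \eqref{equivsys} with initial data $Z(0)=X\circ f$; by the uniqueness part of Theorem~\ref{th:global} this forces $Z(t)=S_t(X\circ f)$, which is exactly the claimed identity $S_t(X)\circ f=S_t(X\circ f)$.

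First I would observe that, because $f$ does not depend on $t$, differentiation in time commutes with composition in the spatial variable: $\frac{d}{dt}\big(X(t)\circ f\big)=\big(X_t(t)\big)\circ f$ in $E$. Here one uses that $X(\dott)\in C^1(\Real_+,E)$ and that right composition by a fixed $f\in\Gr$ is a bounded operation on $E$ (with operator norm controlled through the bounds $1/c\le f_\xi\le c$ of Lemma~\ref{lem:charH}). Applying this componentwise to the equations satisfied by $X(t)$, and noting that $\zeta_t=y_t$ since $\id$ is time-independent, gives $\partial_t(y\circ f)=U\circ f$, $\partial_t(U\circ f)=-Q\circ f$, and $\partial_t(H\circ f)=(U\circ f)^3-2(P\circ f)(U\circ f)$. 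Thus $Z$ solves \eqref{equivsys} provided the nonlocal terms built from $Z$, call them $P_Z$ and $Q_Z$, coincide with $P\circ f$ and $Q\circ f$.

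The heart of the argument is therefore the equivariance of the operators $P$ and $Q$, namely $P_{X\circ f}=P_X\circ f$ and $Q_{X\circ f}=Q_X\circ f$. To check this for $P$, I would substitute the relabeled data into \eqref{repP}. By the chain rule, $(U\circ f)^2(y\circ f)_\xi+(H\circ f)_\xi=\big((U^2y_\xi+H_\xi)\circ f\big)\,f_\xi$, while the kernel becomes $\exp(-\abs{y(t,f(\xi))-y(t,f(\eta))})$. Performing the change of variables $\eta'=f(\eta)$, whose Jacobian $d\eta'=f_\xi\,d\eta$ exactly absorbs the factor $f_\xi$, reproduces the integral \eqref{repP} evaluated at $f(\xi)$, i.e. $P_X(t,f(\xi))$. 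The computation for $Q$ in \eqref{repQ} is identical, the only extra ingredient being that $\sign(y(t,f(\xi))-y(t,f(\eta)))$ is already in the correct form for the same substitution. This establishes $P_Z=P_X\circ f$ and $Q_Z=Q_X\circ f$ and completes the verification that $Z$ solves the system.

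The step I expect to require the most care is the change of variables: one must justify that $f$ is a genuine, orientation-preserving, absolutely continuous substitution. This is precisely where $f\in\Gr$ enters, through Lemma~\ref{lem:charH}, which gives $1/c\le f_\xi\le c$ almost everywhere and hence both injectivity and a nonvanishing Jacobian; integrability of the integrands $U^2y_\xi+H_\xi$ (controlled by the energy variable $H$) makes the manipulations legitimate. One should also confirm that $Z$ takes values in $\G$ and lies in $C^1(\Real_+,E)$, which again follows from $X\circ f\in\F\subset\G$ together with the boundedness of relabeling on $E$. Uniqueness in Theorem~\ref{th:global} then yields the equivariance identity.
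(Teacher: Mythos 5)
Your argument is correct and follows the same route the paper takes: the paper's proof is a one-line remark that the lemma ``follows from the governing equation and the equivariance of the mappings $X\mapsto P(X)$ and $X\mapsto Q(X)$'' (deferring details to \cite{HR}), which is precisely the change-of-variables computation for $P$ and $Q$ plus the uniqueness argument that you carry out explicitly.
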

This follows from the governing equation and the
equivariance of the mappings $X\mapsto P(X)$ and
$X\mapsto Q(X)$, where $P$ and $Q$ are defined in
\eqref{repP} and \eqref{repQ}, see \cite{HR} for
more details. From this lemma we get that
\begin{equation}
  \label{eq:PiSt}
  \Pi\circ S_t\circ \Pi=\Pi\circ S_t.
\end{equation}

\begin{definition}
  We define the semigroup $\bar S_t$ on $\F_0$ as
  \begin{equation*}
    \bar S_t=\Pi\circ S_t.
  \end{equation*}
\end{definition}

The semigroup property of $\bar S_t$ follows from
\eqref{eq:PiSt}. From \cite{HR}, we know that
$\bar S_t$ is continuous with respect to the norm
of $E$. It follows basically from the continuity of
the mapping $\Pi$,  but $\Pi$ is not Lipschitz
continuous and the goal of the next section is to
find a metric that makes $\bar S_t$ Lipschitz
continuous.

\begin{remark} 
  \label{rem:genhyp}
  The details of the construction of the semigroup
  of solutions in Lagrangian coordinates for the
  generalized hyperelastic-rod equation
  \eqref{eq:genhyper} is given in
  \cite{HolRay:06a}. The construction is based on
  a reformulation of the equation in Lagrangian
  coordinates which leads to a semilinear system
  of equations, similar to \eqref{equivsys} for
  the Camassa--Holm equation. In the case of the
  generalized hyperelastic-rod equation, the
  equation can be rewritten as
  \begin{equation}
    \left\{
      \begin{aligned}
        \label{eq:sys}
        \zeta_t&=f'(U),\\
        U_t&=-Q,\\
        H_t&=G(U)-2PU,
      \end{aligned}
    \right.
  \end{equation}
  where $G(v)$ is given by
  \begin{equation}
    \label{eq:Gdef}
    G(v)=\int_0^v(2g(z)+f''(z)z^2)\,dz,
  \end{equation}
  and
  \begin{multline}
    \label{eq:Qrod}
    Q(t,\xi)=-\frac{1}{2}\int_\Real\sign(\xi-\eta)\exp\big(-\sign(\xi-\eta)(y(\xi)-y(\eta))\big)\\
    \times\Big(\big(g(U)-\frac12f''(U)U^2\big)y_\xi+\frac12f''(U)H_\xi\Big)(\eta)\,d\eta,
  \end{multline}
  \begin{multline}
    \label{eq:Prod}
    P(t,\xi)=\frac{1}{2}\int_\Real\exp\big(-\sign(\xi-\eta)(y(\xi)-y(\eta))\big)\\
    \times\Big(\big(g(U)-\frac12f''(U)U^2\big)y_\xi+\frac12f''(U)H_\xi\Big)(\eta)\,d\eta.
  \end{multline}
  Section \ref{sec:semilag} outlines the
  construction of the semigroup of solutions in
  Lagrangian coordinates. The construction of the
  metric in Eulerian coordinates, which is given
  in the following sections, relies basically on
  two fundamental results of this section: The
  Lipschitz stability of the semigroup of solution
  in Lagrangian coordinates (Theorem
  \ref{th:global}) and the equivariance of the
  semigroup (Lemma \ref{lem:equivPi}). The same
  results hold for the generalized hyperelastic-rod equation, see \cite[Theorem 2.8 and Theorem
  3.6]{HolRay:06a} so that it is possible to
  define a Lipschitz stable metric for this
  equation in the same way as we do it for the CH
  equation.

\end{remark}

\section{Lipschitz metric for the semigroup $\bar S_t$}

\begin{definition}
  \label{def:J}
  Let $X_\alpha,X_\beta\in\F$, we define
  $J(X_\alpha,X_\beta)$ as
  \begin{equation}
    \label{eq:defJ}
    J(X_\alpha,X_\beta)=\inf_{f_1,f_2\in\Gr}\big(\norm{X_\alpha\circ f_1-X_\beta}+\norm{X_\alpha-X_\beta\circ f_2}\big).
  \end{equation}
\end{definition}
The mapping $J$ is symmetric. Moreover, if
$X_\alpha$ and $X_\beta$ are equivalent, then
$J(X_\alpha,X_\beta)=0$. Our goal is to create a
distance between equivalence classes, and that is
the reason why we
introduce the pseudosemimetric $\tilde J$ as follows in the periodic case (\cite{GHR}).
\begin{definition}
  \label{def:Jt}
  Let $X_\alpha,X_\beta\in\F$, we define
  $\tilde J(X_\alpha,X_\beta)$ as
  \begin{equation*}
    \tilde J(X_\alpha,X_\beta)=\inf_{f,g\in\Gr}\norm{X_\alpha\circ f-X_\beta\circ g}.
  \end{equation*}
\end{definition}
The pseudosemimetric $\tilde J$ is relabeling
invariant, that is, $\tilde J(X_\alpha\circ
f,X_\beta\circ g)=\tilde
J(X_\alpha,X_\beta)$. With Definition \ref{def:J},
we lose this important property. However,
Definition \ref{eq:defJ} allows us to obtain
estimates that cannot be obtained by Definition
\ref{def:Jt}, see the proof of Theorem
\ref{th:stab}. In addition, it turns out that we
do not actually need the relabeling invariance
property to hold strictly and the estimates
contained in the following lemma are enough for
our purpose.
\begin{lemma}
  \label{lem:Jrelabbound} Given
  $X_\alpha,X_\beta\in \F$ and $f\in\Gr_\kappa$,
  we have
  \begin{equation}
    \label{eq:xrlabnoem}
    \norm{X_\alpha\circ f-X_\beta\circ f}\leq C\norm{X_\alpha-X_\beta}
  \end{equation}
  so that 
  \begin{equation}
    \label{eq:Jrelabbound}
    J(X_\alpha\circ f,X_\beta)\leq CJ(X_\alpha,X_\beta)
  \end{equation}
  for some constant $C$ which depends only on
  $\kappa$.
\end{lemma}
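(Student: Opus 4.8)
The plan is to prove the norm estimate \eqref{eq:xrlabnoem} directly, component by component, via a change of variables that exploits the uniform Jacobian bounds available on $\Gr_\kappa$, and then to deduce \eqref{eq:Jrelabbound} from \eqref{eq:xrlabnoem} together with the group structure of $\Gr$ and the definition of $J$.

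First I would reduce \eqref{eq:xrlabnoem} to a statement about composition acting on a single function. Write $X_\alpha=(y_\alpha,U_\alpha,H_\alpha)$ and $X_\beta=(y_\beta,U_\beta,H_\beta)$ with $y=\zeta+\id$. The one genuinely delicate point is the first component: relabeling sends $y$ to $y\circ f$, so in the $\zeta$-variable the component becomes $\zeta\circ f+(f-\id)$, which is \emph{not} simply $\zeta\circ f$. However, the inhomogeneous term $f-\id$ cancels in the difference, and the first component of $X_\alpha\circ f-X_\beta\circ f$ equals $(y_\alpha-y_\beta)\circ f=(\zeta_\alpha-\zeta_\beta)\circ f$. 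The second and third components are plainly $(U_\alpha-U_\beta)\circ f$ and $(H_\alpha-H_\beta)\circ f$. Thus it suffices to bound $\norm{w\circ f}_V\le C\norm{w}_V$ and $\norm{w\circ f}_{H^1}\le C\norm{w}_{H^1}$, with $C$ depending only on $\kappa$.

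Next I would carry out these two bounds. Since $f$ is a homeomorphism the $L^\infty$-part is invariant, $\norm{w\circ f}_{L^\infty}=\norm{w}_{L^\infty}$. For the derivative part, the chain rule and the substitution $\eta=f(\xi)$ give
\begin{equation*}
\int_\Real\abs{(w\circ f)_\xi}^2\,d\xi=\int_\Real\abs{w_\xi(f(\xi))}^2 f_\xi(\xi)^2\,d\xi=\int_\Real\abs{w_\xi(\eta)}^2 f_\xi(f^{-1}(\eta))\,d\eta\le(1+\kappa)\norm{w_\xi}_{L^2}^2,
\end{equation*}
where the final inequality uses $f_\xi\le 1+\kappa$ from Lemma~\ref{lem:charH}; the same substitution together with $1/f_\xi\le 1+\kappa$ gives $\norm{w\circ f}_{L^2}^2\le(1+\kappa)\norm{w}_{L^2}^2$ for the $L^2$-part of the $H^1$-component. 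Summing the three components yields \eqref{eq:xrlabnoem}, for instance with $C=\sqrt{1+\kappa}$.

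Finally, to obtain \eqref{eq:Jrelabbound} I would argue from the definition of $J$. Fix $\epsi>0$ and choose $f_1,f_2\in\Gr$ with $\norm{X_\alpha\circ f_1-X_\beta}+\norm{X_\alpha-X_\beta\circ f_2}\le J(X_\alpha,X_\beta)+\epsi$. In the infimum defining $J(X_\alpha\circ f,X_\beta)$ I would test the first relabeling with $g_1=f^{-1}\circ f_1\in\Gr$, so that by associativity $(X_\alpha\circ f)\circ g_1=X_\alpha\circ f_1$ and the first term equals $\norm{X_\alpha\circ f_1-X_\beta}$; and I would test the second relabeling with $g_2=f_2\circ f\in\Gr$, so that $X_\beta\circ g_2=(X_\beta\circ f_2)\circ f$ and, applying \eqref{eq:xrlabnoem} to the pair $(X_\alpha,X_\beta\circ f_2)\in\F\times\F$, the second term satisfies $\norm{X_\alpha\circ f-(X_\beta\circ f_2)\circ f}\le C\norm{X_\alpha-X_\beta\circ f_2}$. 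Adding the two bounds and using $C\ge1$ gives $J(X_\alpha\circ f,X_\beta)\le C\,(J(X_\alpha,X_\beta)+\epsi)$, and letting $\epsi\to0$ proves \eqref{eq:Jrelabbound} with a constant depending only on $\kappa$. The main obstacle here is conceptual rather than computational: one must notice the cancellation of the $f-\id$ term so that composition acts on the difference alone, and then use the uniform bound $1/(1+\kappa)\le f_\xi\le1+\kappa$ — available precisely because $f\in\Gr_\kappa$ — to keep $C$ independent of $f$; this is exactly the bound that is unavailable for arbitrary $f\in\Gr$.
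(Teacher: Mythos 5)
Your proposal is correct and follows essentially the same route as the paper's proof: the same cancellation of the $f-\id$ term in the first component, the same change-of-variables estimates using $1/(1+\kappa)\le f_\xi\le 1+\kappa$ from Lemma~\ref{lem:charH} for the $L^2$ and $L^\infty$ pieces, and the same reparametrization of the test relabelings (the paper takes the infimum over $f\circ f_1$ and $f_2\circ f^{-1}$ directly, while you use an $\epsi$-optimal pair and the substitutions $g_1=f^{-1}\circ f_1$, $g_2=f_2\circ f$, which is equivalent).
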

\begin{proof}
  Let us prove \eqref{eq:xrlabnoem}. Let $\bar
  X_\alpha=X_\alpha\circ f$ and $\bar
  X_\beta=X_\beta\circ f$. We have $\bar\zeta_\alpha=\bar y_\alpha-\id$ and
  $\bar\zeta_\beta=\bar y_\beta-\id$ so that
  $\norm{\bar\zeta_\alpha-\bar\zeta_\beta}_{L^\infty}=\norm{\bar
    y_\alpha-\bar y_\beta}_{L^\infty}=\norm{
    y_\alpha- y_\beta}_{L^\infty}=\norm{
    \zeta_\alpha- \zeta_\beta}_{L^\infty}$. Hence,
  $\norm{\bar X_\alpha-\bar
    X_\beta}_{L^\infty}=\norm{X_\alpha-X_\beta}_{L^\infty}$.
By definition we have 
$\bar y_\alpha(\xi)= y_\alpha(f(\xi))= f(\xi)+\zeta_\alpha(f(\xi))=\xi+\bar \zeta_\alpha(\xi)$ and hence $\bar\zeta_\alpha(\xi)=\zeta_\alpha(f(\xi))+f(\xi)-\xi$. Thus
  \begin{align*}
    \norm{\bar\zeta_{\alpha,\xi}-\bar\zeta_{\beta,\xi}}_{L^2}^2&=\norm{\zeta_{\alpha,\xi}\circ ff_\xi-\zeta_{\beta,\xi}\circ ff_\xi}_{L^2}^2\\
    &=\int_\Real(\zeta_{\alpha,\xi}-\zeta_{\beta,\xi})^2(f(\xi))f_\xi^2(\xi)\,d\xi\\
    &\leq (1+\kappa)\int_\Real(\zeta_{\alpha,\xi}-\zeta_{\beta,\xi})^2(f(\xi))f_\xi(\xi)\,d\xi\\
    &\leq (1+\kappa)\int_\Real(\zeta_{\alpha,\xi}-\zeta_{\beta,\xi})^2(\xi)\,d\xi\\
    &\leq (1+\kappa)\norm{\zeta_{\alpha,\xi}-\zeta_{\beta,\xi}}_{L^2}
  \end{align*}
  so that $\norm{\bar X_{\alpha,\xi}-\bar
    X_{\beta,\xi}}_{L^2}\leq
  C\norm{X_{\alpha,\xi}-X_{\beta,\xi}}_{L^2}$. We
  have
  \begin{align}
    \notag
    \norm{\bar U_\alpha-\bar U_\beta}_{L^2}^2&=\int_{\Real}(U_\alpha-U_\beta)^2\circ f(\xi)\,d\xi\\
    \label{eq:estUcircfL2}
    &\leq(1+\kappa)\int_{\Real}(U_\alpha-U_\beta)^2\circ
    f
    f_\xi\,d\xi=(1+\kappa)\norm{U_\alpha-U_\beta}_{L^2}^2.
  \end{align}
  This concludes the proof of
  \eqref{eq:xrlabnoem}. For any $f\in\Gr_\kappa$
  and any $f_1,f_2\in\Gr$, we have
  \begin{align*}
    J(X_\alpha\circ f,X_\beta)&\leq
    \norm{X_\alpha\circ f\circ f_1-X_\beta}+\norm{X_\alpha\circ f-X_\beta\circ f_2}\\
    &\leq \norm{X_\alpha\circ f\circ
      f_1-X_\beta}+C\norm{X_\alpha-X_\beta\circ
      f_2\circ f^{-1}}.
  \end{align*}
  Hence, after taking $C\geq 1$,
  \begin{equation*}
    J(X_\alpha\circ f,X_\beta)\leq C(\norm{X_\alpha\circ f\circ f_1-X_\beta}+\norm{X_\alpha-X_\beta\circ f_2\circ f^{-1}}),
  \end{equation*}
  which implies, after taking the infimum,
  \begin{equation*}
    J(X_\alpha\circ f,X_\beta)\leq C\inf_{f_1,f_2\in\Gr}\big(\norm{X_\alpha\circ f_1-X_\beta}+\norm{X_\alpha-X_\beta\circ f_2}\big).
  \end{equation*}
\end{proof}
From the pseudosemimetric $J$, we obtain a
metric $d$ by the following construction.
\begin{definition}
  Let $X_\alpha,X_\beta\in\F_0$, we define
  $d(X_\alpha,X_\beta)$ as
  \begin{equation}
    \label{eq:defdist}
    d(X_\alpha,X_\beta)=\inf \sum_{i=1}^NJ(X_{n-1},X_n)
  \end{equation}
  where the infimum is taken over all sequences
  $\{X_n\}_{n=0}^N\in\F_0$ which satisfy
  $X_0=X_\alpha$ and $X_N=X_\beta$.
\end{definition}
\begin{lemma}
  \label{lem:LinfbdJ}
  For any $X_\alpha,X_\beta\in\F_0$, we have
  \begin{equation}
    \label{eq:LinfbdJ}
    \norm{X_\alpha-X_\beta}_{L^\infty}\leq 2 d(X_\alpha,X_\beta).
  \end{equation}
\end{lemma}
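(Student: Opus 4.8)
The plan is to reduce \eqref{eq:LinfbdJ} to the single pairwise estimate
\[
\norm{X_\alpha-X_\beta}_{L^\infty}\leq 2J(X_\alpha,X_\beta)\quad\text{for all }X_\alpha,X_\beta\in\F_0,
\]
and then to propagate it along chains. Since $\norm{\dott}_{L^\infty}$ is a genuine seminorm, for every finite sequence $\{X_n\}_{n=0}^N\subset\F_0$ with $X_0=X_\alpha$, $X_N=X_\beta$ the triangle inequality gives $\norm{X_\alpha-X_\beta}_{L^\infty}\leq\sum_{n=1}^N\norm{X_{n-1}-X_n}_{L^\infty}\leq 2\sum_{n=1}^NJ(X_{n-1},X_n)$, and taking the infimum over all such sequences in \eqref{eq:defdist} yields \eqref{eq:LinfbdJ}. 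Thus the whole content is the pairwise estimate, which I would prove by exploiting the rigidity of $\F_0$.

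First I would record the structure of $\F_0$. Elements of $\F_0$ satisfy $y+H=\id$, hence $y_\xi+H_\xi=1$ almost everywhere; combined with $y_\xi,H_\xi\geq0$ from \eqref{eq:lagcoord2} this forces $0\leq y_\xi\leq1$ and $0\leq H_\xi\leq1$, so both $y$ and $H$ are $1$-Lipschitz. The algebraic constraint \eqref{eq:lagcoord3} then gives $U_\xi^2\leq y_\xi H_\xi\leq\frac14(y_\xi+H_\xi)^2=\frac14$, so $U$ is $\frac12$-Lipschitz. Moreover, writing $y=\id-H$ for both profiles shows $\zeta_\alpha-\zeta_\beta=y_\alpha-y_\beta=-(H_\alpha-H_\beta)$, so the $\zeta$- and $H$-components have the same $L^\infty$ difference and it suffices to control the $U$- and $H$-components.

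For the pairwise estimate I would fix $f_1\in\Gr$ and set $\hat X=X_\alpha\circ f_1=(\hat y,\hat U,\hat H)$. Since $(y_\alpha+H_\alpha)\circ f_1=f_1$ while $y_\beta+H_\beta=\id$, we have $f_1-\id=(\hat y-y_\beta)+(\hat H-H_\beta)$, whence, using that the $V$-norm dominates the $L^\infty$-norm,
\[
\norm{f_1-\id}_{L^\infty}\leq\norm{\hat y-y_\beta}_{L^\infty}+\norm{\hat H-H_\beta}_{L^\infty}\leq\norm{X_\alpha\circ f_1-X_\beta}.
\]
Decomposing each component through $\hat X$ and using the Lipschitz bounds on $H_\alpha$ and $U_\alpha$ then gives
\[
\norm{H_\alpha-H_\beta}_{L^\infty}\leq\norm{H_\alpha-H_\alpha\circ f_1}_{L^\infty}+\norm{\hat H-H_\beta}_{L^\infty}\leq\norm{f_1-\id}_{L^\infty}+\norm{\hat X-X_\beta}\leq 2\norm{\hat X-X_\beta},
\]
and likewise, with the Sobolev embedding $H^1(\Real)\hookrightarrow L^\infty(\Real)$ applied to $\hat U-U_\beta$, $\norm{U_\alpha-U_\beta}_{L^\infty}\leq\frac12\norm{f_1-\id}_{L^\infty}+\norm{\hat U-U_\beta}_{L^\infty}\leq 2\norm{\hat X-X_\beta}$. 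Hence every component is bounded by $2\norm{X_\alpha\circ f_1-X_\beta}$, so $\norm{X_\alpha-X_\beta}_{L^\infty}\leq 2\norm{X_\alpha\circ f_1-X_\beta}$; taking the infimum over $f_1$ and discarding the nonnegative second term in \eqref{eq:defJ} produces the claimed $\norm{X_\alpha-X_\beta}_{L^\infty}\leq 2J(X_\alpha,X_\beta)$.

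The hard part is the relabeling displacement term $\norm{U_\alpha-U_\alpha\circ f_1}_{L^\infty}$ (and its $H$-analogue): relabeling moves the argument, so without an a priori modulus of continuity for $U_\alpha$ this term is not controlled by $\norm{X_\alpha\circ f_1-X_\beta}$. It is precisely the restriction to $\F_0$ that saves the argument, since \eqref{eq:lagcoord3} upgrades the pointwise energy relation to the uniform Lipschitz bounds above, while the asymmetric form of $J$ — relabel the source, keep the target fixed — is what lets us estimate $\norm{f_1-\id}_{L^\infty}$ via $y_\beta+H_\beta=\id$. This is exactly the step that would be unavailable for the relabeling-invariant $\tilde J$, and it is the reason the estimate closes with the clean constant $2$.
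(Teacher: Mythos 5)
Your overall strategy coincides with the paper's: first establish the pairwise bound $\norm{X_\alpha-X_\beta}_{L^\infty}\le 2J(X_\alpha,X_\beta)$ by inserting the relabeled element $X_\alpha\circ f_1$, controlling $\norm{f_1-\id}_{L^\infty}$ through the identity $y+H=\id$ on $\F_0$, and using the a.e.\ bounds $0\le y_\xi, H_\xi\le 1$, $\abs{U_\xi}\le\tfrac12$ to absorb the displacement terms; then sum over a near-optimal chain, exactly as in the paper. The one step that does not close is the final aggregation: from ``each component of $X_\alpha-X_\beta$ is bounded in $L^\infty$ by $2\norm{X_\alpha\circ f_1-X_\beta}$'' you conclude $\norm{X_\alpha-X_\beta}_{L^\infty}\le 2\norm{X_\alpha\circ f_1-X_\beta}$, which is only valid if $\norm{\dott}_{L^\infty}$ on $E$ is the \emph{maximum} of the component norms. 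The paper's own proof (the estimate $\norm{X_{\alpha,\xi}}_{L^\infty}\le 3$ and the inequality $\norm{f-\id}_{L^\infty}\le\norm{X_\alpha\circ f-X_\beta}_{L^\infty}$) shows it is working with the \emph{sum} of the component norms; with that convention your componentwise estimates add up to $\norm{X_\alpha-X_\beta}_{L^\infty}\le\tfrac72\norm{X_\alpha\circ f_1-X_\beta}$ (the displacement terms contribute $(1+\tfrac12+1)\norm{f_1-\id}_{L^\infty}$ in total), and discarding the nonnegative $f_2$-term of the infimand then yields only $\norm{X_\alpha-X_\beta}_{L^\infty}\le\tfrac72\, J(X_\alpha,X_\beta)$, not the asserted constant $2$.

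The repair is precisely where the paper's proof differs from yours: instead of discarding the second term in the definition of $J$, prove the symmetric one-sided bound $\norm{X_\alpha-X_\beta}_{L^\infty}\le C\norm{X_\alpha-X_\beta\circ f_2}$ as well (the same argument with the roles of $\alpha$ and $\beta$ exchanged) and \emph{add} the two inequalities before taking the infimum; this halves the constant and gives $\norm{X_\alpha-X_\beta}_{L^\infty}\le\tfrac{C}{2}\big(\norm{X_\alpha\circ f_1-X_\beta}+\norm{X_\alpha-X_\beta\circ f_2}\big)$. With the paper's one-sided constant $C=4$ this is exactly $2J$; with your sharper bound $\abs{U_\xi}\le\tfrac12$ it even improves to $\tfrac74 J$. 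The chain argument reducing $d$ to $J$ at the beginning of your proof is identical to the paper's and is fine.
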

\begin{proof}
  First, we prove that, for any
  $X_\alpha,X_\beta\in\F_0$, we have
  \begin{equation}
    \label{eq:linfcompj}
    \norm{X_\alpha-X_\beta}_{L^\infty}\leq 2 J(X_\alpha,X_\beta).
  \end{equation}
  We have
  \begin{align}
    \notag
    \norm{X_\alpha-X_\beta}_{L^\infty}&\leq\norm{X_\alpha-X_\alpha\circ
      f}_{L^\infty}+\norm{X_\alpha\circ f-X_\beta}_{L^{\infty}}\\
    \label{eq:xamxbediffg}
    &\leq
    \norm{X_{\alpha,\xi}}_{L^\infty}\norm{f-\id}_{L^\infty}+\norm{X_\alpha\circ
      f-X_\beta}_{L^{\infty}}.
  \end{align}
  It follows from the definition of $\F_0$ that
  $0\leq y_\xi\leq1$, $0\leq H_\xi\leq1$ and
  $\abs{U_\xi}\leq1$ so that
  $\norm{X_{\alpha,\xi}}_{L^\infty}\leq 3$. We
  also have
  \begin{equation*}
    \norm{f-\id}_{L^\infty}=\norm{(y_\alpha+H_\alpha)\circ f-(y_\beta+H_\beta)}_{L^\infty}\leq\norm{X_\alpha\circ f-X_\beta}_{L^\infty}.
  \end{equation*}
  Hence, from \eqref{eq:xamxbediffg}, we get
  \begin{equation*}
    \norm{X_\alpha-X_\beta}_{L^\infty}\leq4\norm{X_\alpha\circ f-X_\beta}_{L^\infty}.
  \end{equation*}
  In the same way, we obtain
  $\norm{X_\alpha-X_\beta}_{L^\infty}\leq4\norm{X_\alpha-X_\beta\circ
    f}_{L^\infty}$ for any $f\in\Gr$. After adding
  these two last inequalities and taking the
  infimum, we get \eqref{eq:linfcompj}. For any
  $\epsi>0$, we consider a sequence
  $\{X_n\}_{n=0}^N\in\F_0$ such that
  $X_0=X_\alpha$ and $X_N=X_\beta$ and
  $\sum_{i=1}^NJ(X_{n-1},X_n)\leq
  d(X_\alpha,X_\beta)+\epsi$. We have
  \begin{align*}
    \norm{X_\alpha-X_\beta}_{L^\infty}&\leq
    \sum_{n=1}^{N}\norm{X_{n-1}-X_n}_{L^\infty}\\
    &\leq2\sum_{n=1}^{N}J(X_{n-1},X_n)\\
    &\leq2(d(X_\alpha,X_\beta)+\epsi).
  \end{align*}
  After letting $\epsi$ tend to zero, we get
  \eqref{eq:LinfbdJ}.
\end{proof}

\begin{lemma}
  The mapping $d:\F_0\times\F_0\to\Real_+$ is a
  distance on $\F_0$, which is bounded as follows
  \begin{equation}
    \label{eq:dequiv}
    \frac12\norm{X_\alpha-X_\beta}_{L^\infty}\leq d(X_\alpha,X_\beta)\leq2\norm{X_\alpha-X_\beta}.
  \end{equation}
\end{lemma}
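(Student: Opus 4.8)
The plan is to prove the two inequalities in \eqref{eq:dequiv} first, since together they upgrade $d$ from a pseudometric to a genuine distance. The upper bound is essentially free: taking the one-term chain $N=1$ with $X_0=X_\alpha$, $X_1=X_\beta$ in the infimum defining $d$ gives $d(X_\alpha,X_\beta)\le J(X_\alpha,X_\beta)$, and choosing $f_1=f_2=\id$ in Definition \ref{def:J} yields $J(X_\alpha,X_\beta)\le 2\norm{X_\alpha-X_\beta}$. The lower bound is nothing but Lemma~\ref{lem:LinfbdJ}, which already states $\norm{X_\alpha-X_\beta}_{L^\infty}\le 2\,d(X_\alpha,X_\beta)$. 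Thus \eqref{eq:dequiv} follows by combining these.

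For the metric axioms, I would proceed as follows. Non-negativity is clear since $J\ge 0$. Reflexivity $d(X_\alpha,X_\alpha)=0$ follows from the upper bound with $X_\beta=X_\alpha$, or directly from $J(X_\alpha,X_\alpha)=0$ (take $f_1=f_2=\id$). Symmetry follows by reversing any admissible chain $X_0=X_\alpha,\dots,X_N=X_\beta$ to the chain $X_N,\dots,X_0$ and invoking the symmetry of $J$, which leaves the value of $\sum_{n=1}^N J(X_{n-1},X_n)$ unchanged. The triangle inequality follows from the chain construction itself: given $\epsi>0$, I concatenate a near-optimal chain from $X_\alpha$ to $X_\beta$ with one from $X_\beta$ to $X_\gamma$ to obtain an admissible chain from $X_\alpha$ to $X_\gamma$ passing through $X_\beta$, whose associated sum is the sum of the two; taking the infimum and letting $\epsi\to 0$ gives $d(X_\alpha,X_\gamma)\le d(X_\alpha,X_\beta)+d(X_\beta,X_\gamma)$. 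All intermediate elements remain in $\F_0$ by definition of the chains, so the concatenation is legitimate.

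The only property distinguishing a metric from a pseudometric is definiteness, $d(X_\alpha,X_\beta)=0\implies X_\alpha=X_\beta$, and this is exactly where the lower bound does its work. If $d(X_\alpha,X_\beta)=0$, then \eqref{eq:LinfbdJ} forces $\norm{X_\alpha-X_\beta}_{L^\infty}=0$. Since every component of an element of $\F_0\subset\G$ is continuous (indeed in $W^{1,\infty}$ by \eqref{eq:lagcoord1}), vanishing of the supremum norm gives $X_\alpha=X_\beta$ identically, as required.

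I expect no serious obstacle here. The genuine analytic content — passing from the chain-summed quantity $d$ back to a pointwise bound on $X_\alpha-X_\beta$, which rests on the rigidity $y+H=\id$ characterizing $\F_0$ — has already been absorbed into Lemma~\ref{lem:LinfbdJ}. What remains is the routine verification that the chain construction turns the symmetric, reflexive pseudosemimetric $J$ into a symmetric pseudometric obeying the triangle inequality, together with the two sandwiching estimates. The one point worth a word of care is that separation genuinely requires the lower bound and does \emph{not} follow from $J$ alone, since $J$ only vanishes on entire equivalence classes; it is precisely the restriction to $\F_0$, where each equivalence class is met in the single point $\Pi(X)$, that makes $d$ separate points.
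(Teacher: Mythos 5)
Your proof is correct and follows essentially the same route as the paper: the upper bound via the one-term chain with $f_1=f_2=\id$, the lower bound and definiteness via Lemma~\ref{lem:LinfbdJ}, and symmetry and the triangle inequality from the chain construction. You simply spell out the routine verifications that the paper leaves implicit.
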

\begin{proof}
  The symmetry is embedded in the definition of
  $J$ while the construction of $d$ from $J$ takes
  care of the triangle inequality. From Lemma
  \ref{lem:LinfbdJ}, we get that
  $d(X_\alpha,X_\beta)=0$ implies
  $X_\alpha=X_\beta$. The first inequality in
  \eqref{eq:dequiv} follows from Lemma
  \ref{lem:LinfbdJ} while the second one follows
  from the definition of $J$ and $d$. Indeed, we
  have
  \begin{equation*}
    d(X_\alpha,X_\beta)\leq J(X_\alpha,X_\beta)\leq2\norm{X_\alpha-X_\beta}.
  \end{equation*}
\end{proof}

We need to introduce the subsets of bounded energy
in $\F_0$. Note that the total energy is equal to
$H(\infty)-H(-\infty)=\norm{H}_{L^\infty}$ as
$H(-\infty)=0$ and $H$ is increasing, see
Definition \ref{def:F}.

\begin{definition}
  We denote by $\F^M$ the set
  \begin{equation*}
    \F^M=\{X=(y,U,H)\in \F\ |\ \norm{H}_{L^\infty}\leq M\}
  \end{equation*}
  and
  \begin{equation*}
    \F_0^M=\F_0\cap\F^M.
  \end{equation*}
\end{definition}
The ball $B_M$ (see \eqref{eq:defBM}) is not
preserved by the equation while the set $\F^M$ is
preserved because of the conservation of 
energy, namely,
\begin{equation*}
  \norm{H(t,\cdot)}_{L^\infty}=\lim_{\xi\to\infty}H(t,\xi)=\lim_{\xi\to\infty}H(0,\xi)=\norm{H(0,\cdot)}_{L^\infty}.
\end{equation*}
The set $\F^M$ is also conserved by relabeling as,
for any $f\in\Gr$, $\norm{H\circ
  f}_{L^\infty}=\norm{H}_{L^\infty}$.  The ball
$B_M$ is included in $\F^M$ but the
reverse inclusion does not hold. However, as the
next lemma shows, when we restrict ourselves to
$\F_0$, the sets $\F_0\cap\F^M$ and $\F_0\cap B_M$
are in fact equivalent.
\begin{lemma}
  \label{lem:bdEf0M}
  For any element $X\in\F_0^M$, we have
  \begin{equation}
    \label{eq:bdEF0m}
    \F_0\cap B_M\subset \F_0^M\subset B_{\bar M}
  \end{equation}
  for some constant $\bar M$ depending only on
  $M$.
\end{lemma}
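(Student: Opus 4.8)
The statement contains two inclusions, and the first is immediate. If $X=(y,U,H)\in\F_0\cap B_M$, then $\norm{H}_{L^\infty}\le\norm{H}_V\le\norm{X}\le M$, so $X\in\F^M$; since also $X\in\F_0$, we get $X\in\F_0^M$. Thus the real content of the lemma is the second inclusion $\F_0^M\subset B_{\bar M}$, and the plan is to bound each of the three components appearing in $\norm{X}=\norm{\zeta}_V+\norm{U}_{H^1}+\norm{H}_V$ in terms of $M$ alone.

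First I would record the structural consequences of membership in $\F_0$. Since $y+H=\id$, we have $\zeta=y-\id=-H$, so that $\norm{\zeta}_V=\norm{H}_V$ and it suffices to control $\norm{H}_V$ and $\norm{U}_{H^1}$. Differentiating $y+H=\id$ gives $y_\xi+H_\xi=1$, which together with $y_\xi,H_\xi\ge0$ (from the definition of $\G$) yields $0\le y_\xi\le1$ and $0\le H_\xi\le1$. The normalization $\lim_{\xi\to-\infty}H=0$ from \eqref{eq:lagcoord2} together with the monotonicity of $H$ gives $\int_\Real H_\xi\,d\xi=\norm{H}_{L^\infty}\le M$. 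From $H_\xi\le1$ we then get $\norm{H_\xi}_{L^2}^2\le\int_\Real H_\xi\,d\xi\le M$, which bounds $\norm{H}_V$ (and hence $\norm{\zeta}_V$) by a constant depending only on $M$. The constraint \eqref{eq:lagcoord3} reads $U_\xi^2=y_\xi H_\xi-y_\xi^2U^2\le y_\xi H_\xi\le H_\xi$, so $\norm{U_\xi}_{L^2}^2\le M$ as well.

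The one genuinely delicate point is the bound on $\norm{U}_{L^2}$: the constraint only furnishes the Eulerian energy bound $\int_\Real y_\xi U^2\,d\xi\le\int_\Real H_\xi\,d\xi\le M$, which degenerates precisely where $y_\xi$ is small. To get around this I would first extract a uniform pointwise bound on $U$ from \eqref{eq:lagcoord3}. Writing $U(\xi)^2=2\int_{-\infty}^\xi UU_\eta\,d\eta$ and using $\abs{U_\eta}\le\sqrt{y_\eta H_\eta}$ together with $\sqrt{y_\eta}\,\abs{U}\le\sqrt{H_\eta}$ (both immediate from \eqref{eq:lagcoord3}) gives $2\abs{U}\,\abs{U_\eta}\le2H_\eta$, whence $\norm{U}_{L^\infty}^2\le2\norm{H}_{L^\infty}\le2M$. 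Then I would split $\Real=\{y_\xi\ge1/2\}\cup\{y_\xi<1/2\}$: on the first set $\int U^2\,d\xi\le2\int y_\xi U^2\,d\xi\le2M$, while on the second set $H_\xi=1-y_\xi>1/2$ forces $\meas\{y_\xi<1/2\}\le2\int_\Real H_\xi\,d\xi\le2M$, so that $\int_{\{y_\xi<1/2\}}U^2\,d\xi\le\norm{U}_{L^\infty}^2\,\meas\{y_\xi<1/2\}\le4M^2$. Adding the two pieces bounds $\norm{U}_{L^2}^2\le 2M+4M^2$, and hence $\norm{U}_{H^1}$, by a constant depending only on $M$. Collecting the three component bounds gives the desired $\bar M$. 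The main obstacle is thus the $L^2$ bound on $U$, for which the pointwise $L^\infty$ estimate coming from the constraint, followed by the dichotomy on the size of $y_\xi$, is the crucial device.
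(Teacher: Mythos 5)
Your proof is correct and follows essentially the same route as the paper: the derivative bounds come from $y_\xi+H_\xi=1$ and the constraint \eqref{eq:lagcoord3}, and the crux --- the $L^2$ bound on $U$ --- is obtained exactly as in the paper by first deriving $\norm{U}_{L^\infty}^2\le CM$ from $U^2(\xi)=2\int_{-\infty}^\xi UU_\eta\,d\eta$ combined with \eqref{eq:lagcoord3}. The only cosmetic difference is the final assembly: the paper writes $\int_\Real U^2\,d\xi=\int_\Real U^2y_\xi\,d\xi+\int_\Real U^2H_\xi\,d\xi\le M(1+\norm{U}_{L^\infty}^2)$ directly using $y_\xi+H_\xi=1$, whereas you split according to whether $y_\xi\ge 1/2$ and bound the measure of the bad set; both yield a constant depending only on $M$.
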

\begin{proof}
  Since $y_\xi+H_\xi=1$, $H_\xi\geq0$,
  $y_\xi\geq0$, we get $0\leq H_\xi\leq 1$ and
  $0\leq y_\xi\leq 1$. Hence,
  $\norm{H_\xi}_{L^2}^2\leq\int_\Real
  H_\xi\,d\xi=H(\infty)\leq M$. Since $\zeta=-H$,
  we get $\norm{\zeta_\xi}_{L^2}\leq M$. By
  \eqref{eq:lagcoord3}, we get $U_\xi^2\leq y_\xi
  H_\xi$ and therefore
  $\int_{\Real}U_\xi^2\,d\xi\leq H(\infty)\leq
  M$. Finally, we have to show that $\norm{U}_{L^2} \leq C(M)$. Therefore observe that by \eqref{eq:lagcoord3}, 
  $\int_\Real U^2y_\xi d\xi\leq\int_\Real H_\xi d\xi\leq M$. This together with the fact that $X\in\F_0$ yields 
  \begin{equation}\nn 
   \int_\Real U^2d\xi =\int_\Real U^2y_\xi d\xi+\int_\Real U^2H_\xi d\xi\leq M(1+\norm{U}_{L^\infty}^2). 
  \end{equation}
  Thus it is left to estimate $\norm{U}_{L^\infty}$, which can be done as follows,
  \begin{equation}\nn
   U^2(\xi)=2\int_{-\infty}^\xi U(\eta)U_\xi(\eta)d\eta=2\int_{\{\eta\leq \xi \vert y_\xi(\eta)>0\}} U(\eta)U_\xi(\eta) d\eta, 
  \end{equation}
  where we used that $U_\xi(\xi)=0$, when $y_\xi(\xi)=0$ by \eqref{eq:lagcoord3}. For almost every $\xi$ such that $y_\xi(\xi)>0$, we have 
  \begin{equation}\nn
   \vert U(\xi)U_\xi(\xi)\vert =\vert \sqrt{y_\xi(\xi)}U(\xi)\frac{U_\xi(\xi)}{\sqrt{y_\xi(\xi)}}\vert \leq \frac{1}{2}\Big(U^2(\xi)y_\xi(\xi)+\frac{U_\xi^2(\xi)}{y_\xi(\xi)}\Big)\leq \frac{1}{2} H_\xi(\xi), 
  \end{equation}
  from \eqref{eq:lagcoord3} and hence $\norm{U}_{L^\infty}^2\leq M$ and $\norm{U}_{L^2}^2 \leq M(1+M)$. 
\end{proof}

\begin{definition}\label{def:metric}
  Let $d^M$ be the distance on $\F_0^M$ which is
  defined, for any $X_\alpha,X_\beta\in\F_0^M$, as
  \begin{equation*}
    d^M(X_\alpha,X_\beta)=\inf \sum_{n=1}^NJ(X_{n-1},X_n)
  \end{equation*}
  where the infimum is taken over all the
  sequences $\{X_n\}_{n=0}^N\in\F_0^M$ which
  satisfy $X_0=X_\alpha$ and $X_N=X_\beta$.
\end{definition}

We can now prove our main stability theorem. 
\begin{theorem}
  \label{th:stab} Given $T>0$ and $M>0$, there
  exists a constant $C_M$ which depends only on $M$
  and $T$ such that, for any
  $X_\alpha,X_\beta\in\F_0^M$ and $t\in[0,T]$, we
  have
  \begin{equation}
    \label{eq:stab}
    d^M(\bar S_tX_\alpha,\bar S_tX_\beta)\leq C_Md^M(X_\alpha,X_\beta).
  \end{equation}
\end{theorem}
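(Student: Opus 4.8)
The plan is to first establish the corresponding Lipschitz estimate for the pseudosemimetric $J$ under $\bar S_t$, namely that there is a constant $\tilde C_M$ depending only on $M$ and $T$ with
\begin{equation*}
 J(\bar S_tX_\alpha,\bar S_tX_\beta)\leq \tilde C_M\,J(X_\alpha,X_\beta)
\end{equation*}
for all $X_\alpha,X_\beta\in\F_0^M$ and $t\in[0,T]$, and then to lift this to $d^M$ by chaining. For the lifting step I would fix $\epsi>0$, pick an $\epsi$-optimal chain $X_\alpha=X_0,X_1,\dots,X_N=X_\beta$ in $\F_0^M$ with $\sum_{n=1}^NJ(X_{n-1},X_n)\leq d^M(X_\alpha,X_\beta)+\epsi$, and note that since the energy $\norm{H}_{L^\infty}$ is conserved and $\Pi$ maps into $\F_0$, the images $\bar S_tX_0,\dots,\bar S_tX_N$ again form an admissible chain in $\F_0^M$ joining $\bar S_tX_\alpha$ to $\bar S_tX_\beta$. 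This would give $d^M(\bar S_tX_\alpha,\bar S_tX_\beta)\leq\sum_nJ(\bar S_tX_{n-1},\bar S_tX_n)\leq\tilde C_M\sum_nJ(X_{n-1},X_n)\leq\tilde C_M(d^M(X_\alpha,X_\beta)+\epsi)$, and $\epsi\to0$ finishes the argument.

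The heart of the matter is the $J$-estimate for $S_t$, and here I would exploit precisely the feature that distinguishes $J$ from $\tilde J$. By Lemma~\ref{lem:bdEf0M} both $X_\alpha,X_\beta\in B_{\bar M}$, so that $J(X_\alpha,X_\beta)\leq2\norm{X_\alpha-X_\beta}\leq4\bar M$ is a priori bounded. Given $\epsi\in(0,1]$, I would choose $f_1,f_2\in\Gr$ with $\norm{X_\alpha\circ f_1-X_\beta}+\norm{X_\alpha-X_\beta\circ f_2}\leq J(X_\alpha,X_\beta)+\epsi$. The crucial gain over $\tilde J$ is that the relabeled elements are now automatically controlled: by the triangle inequality
\begin{equation*}
 \norm{X_\alpha\circ f_1}\leq\norm{X_\beta}+\norm{X_\alpha\circ f_1-X_\beta}\leq\bar M+4\bar M+1=:M',
\end{equation*}
and likewise $\norm{X_\beta\circ f_2}\leq M'$, with $M'$ depending only on $M$. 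Since $\F$ is invariant under relabeling, $X_\alpha\circ f_1,X_\beta\circ f_2\in\F\subset\G$, so all four elements lie in $\G\cap B_{M'}$, and combining the stability estimate \eqref{eq:stabSt} of Theorem~\ref{th:global} with the equivariance $S_t(X\circ f)=S_t(X)\circ f$ of Lemma~\ref{lem:equivPi} would yield
\begin{align*}
 J(S_tX_\alpha,S_tX_\beta)&\leq\norm{S_t(X_\alpha\circ f_1)-S_tX_\beta}+\norm{S_tX_\alpha-S_t(X_\beta\circ f_2)}\\
 &\leq C_{M'}\big(\norm{X_\alpha\circ f_1-X_\beta}+\norm{X_\alpha-X_\beta\circ f_2}\big)\\
 &\leq C_{M'}\big(J(X_\alpha,X_\beta)+\epsi\big).
\end{align*}
Letting $\epsi\to0$ gives $J(S_tX_\alpha,S_tX_\beta)\leq C_{M'}J(X_\alpha,X_\beta)$ with $C_{M'}$ depending only on $M$ and $T$.

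It then remains to pass from $S_t$ to $\bar S_t=\Pi\circ S_t$. Writing $S_tX_\alpha=(y_\alpha(t),U_\alpha(t),H_\alpha(t))$, I would use that $\bar S_tX_\alpha=S_tX_\alpha\circ f_\alpha$ with $f_\alpha=(y_\alpha(t)+H_\alpha(t))^{-1}$, and similarly $f_\beta$. Since $X_\alpha,X_\beta\in\F_0$ with $\norm{X_\alpha},\norm{X_\beta}\leq\bar M$, Lemma~\ref{lem:Fpres} (applied with $\kappa=0$) gives $S_tX_\alpha,S_tX_\beta\in\F_{\kappa'}$ for some $\kappa'$ depending only on $M$ and $T$; as the defining condition of $\Gr_{\kappa'}$ is symmetric under $g\mapsto g^{-1}$, both $f_\alpha,f_\beta\in\Gr_{\kappa'}$. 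Applying Lemma~\ref{lem:Jrelabbound} once to each argument (using the symmetry of $J$ for the second) would then give
\begin{equation*}
 J(\bar S_tX_\alpha,\bar S_tX_\beta)=J(S_tX_\alpha\circ f_\alpha,S_tX_\beta\circ f_\beta)\leq C^2\,J(S_tX_\alpha,S_tX_\beta),
\end{equation*}
with $C$ depending only on $\kappa'$, hence on $M$ and $T$. Combining with the previous paragraph yields the desired $J$-bound with $\tilde C_M=C^2C_{M'}$.

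The main obstacle is the $J$-estimate for $S_t$, and more precisely the derivation of the uniform bound $M'$ on the relabeled elements; this is exactly where the non-relabeling-invariant norm of $E$ forces the use of $J$ rather than the more natural $\tilde J$. With $\tilde J$ the competing diffeomorphisms act on both arguments at once and are a priori unconstrained, so one cannot bound $\norm{X_\alpha\circ f}$ and the constant $C_{M'}$ of Theorem~\ref{th:global} would be unavailable (cf.\ the discussion around \eqref{eq:notusstab}). The definition of $J$ pins each relabeling against the other, \emph{unrelabeled} element, and it is this coupling---together with the a priori bound $X_\alpha,X_\beta\in B_{\bar M}$ from Lemma~\ref{lem:bdEf0M}---that converts the near-optimality inequality into the usable bound $\norm{X_\alpha\circ f_1}\leq M'$. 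Once this is in place, the remainder is a routine application of the already-established Lipschitz stability, equivariance, and near-invariance (Theorem~\ref{th:global}, Lemmas~\ref{lem:equivPi} and~\ref{lem:Jrelabbound}) followed by the chaining argument.
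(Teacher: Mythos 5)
Your proof is correct and follows essentially the same route as the paper's: the decisive step in both is the a priori bound on the relabeled elements $\norm{X\circ f}$, obtained from the near-optimality of $f$ in the definition of $J$ together with Lemma~\ref{lem:bdEf0M}, followed by equivariance, the Lipschitz stability \eqref{eq:stabSt}, and Lemma~\ref{lem:Jrelabbound} (via Lemma~\ref{lem:Fpres}) to absorb the projection $\Pi$. The only organizational difference is that you factor out a pairwise $J$-estimate for $\bar S_t$ and then chain, whereas the paper carries the whole $\epsi$-optimal chain through the argument at once; this changes nothing of substance.
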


\begin{proof}
  By the definition of $d^M$, for any $\epsi$ such
  that $0<\epsi\leq 1 $ there exists a sequence
  $\{X_n\}_{n=0}^N$ in $\F_0^M$ such that
  $X_0=X_\alpha$, $X_N=X_\beta$,
  \begin{equation*}
    \sum_{n=1}^N J(X_{n-1},X_n)\leq d^M(X_\alpha,X_\beta)+\epsi.
  \end{equation*}
  Hence, there exist functions
  $\{f_n\}_{n=0}^{N-1}$,
  $\{\tilde{f}_n\}_{n=1}^{N}$ in $\Gr$ such that
  \begin{equation}
    \label{eq:sumXnm1}
    \sum_{n=1}^N(\norm{X_{n-1}\circ f_{n-1}-X_{n}}+\norms{X_{n-1}-X_n\circ \tilde{f}_{n}})\leq
    d^M(X_\alpha,X_\beta)+2\epsi.
  \end{equation}
  Let us denote 
  \begin{equation*}
    X_n^t=S_t(X_n),\quad g_n^t=y_n^t+H_n^t,\quad \bar X_n^t=\bar S_t X_n=\Pi(X_n^t)=X_n^t\circ(g_n^t)^{-1}.
  \end{equation*}
  By Lemma \ref{lem:Fpres}, we have
  $g_n^t\in\Gr_\kappa$ for some $\kappa$ which
  depends only on $M$ and $T$. The sequence
  $\{\bar X_n^t\}$ has endpoints given by $\bar
  S_t(X_\alpha)$ and $\bar S_t(X_\beta)$. Since
  $X_n\in\F^M$ and the set $\F^M$ is preserved by
  the flow of the equation and relabeling, we
  have $\bar X_n^t\in\F_0\cap\F^M=\F_0^M$ so that
  the sequence $\{\bar X_n^t\}$ is in $\F_0^M$, as
  required in the definition of $d^M$. For
  $f_{n-1}^t=g_{n-1}^t\circ f_{n-1}\circ
  (g_n^t)^{-1}$, we have
  \begin{align}
    \notag
    \norm{\bar X_{n-1}^t\circ f_{n-1}^t-\bar
      X_{n}^t}&=\norm{X_{n-1}^t\circ
      (g_{n-1}^t)^{-1}\circ
      f_{n-1}^t-X_{n}^t\circ(g_{n}^t)^{-1}}\\
    \notag
    &\leq C_M\norm{X_{n-1}^t\circ
      (g_{n-1}^t)^{-1}\circ
      f_{n-1}^t\circ(g_{n}^t)-X_{n}^t}\text{ (by \eqref{eq:xrlabnoem})}\\
    \notag
    &=C_M\norm{X_{n-1}^t\circ f_{n-1}-X_{n}^t}\\
    \notag
    &=C_M\norm{S_t(X_{n-1})\circ f_{n-1}-S_t(X_n)}\\
    \label{eq:esbefSt}
    &=C_M\norm{S_t(X_{n-1}\circ
      f_{n-1})-S_t(X_n)}\text{ (by the
      equivariance of $S_t$)}.
  \end{align}
  To use the stability result \eqref{eq:stabSt},
  we have to bound $\norm{X_{n-1}\circ f_{n-1}}$
  and $\norm{X_n}$. By Lemma \ref{lem:bdEf0M}, there
  exists $\bar M$ such that $\norm{X}\leq \bar M$
  for any $X\in\F_0^M$. Hence, $\norm{X_n}\leq\bar
  M$ as $X_n\in\F_0^M$. Since $f_{n-1}$ is a priori
  arbitrary, it may seem difficult to bound
  $\norm{X_n\circ f_{n-1}}$, and it is important to
  note here that the relabeling invariant
  pseudosemimetric $\tilde J$, see
  \eqref{eq:defJt}, would not provide us with a
  bound on this term and the following estimates in
  fact motivate the Definition
  \ref{eq:defJ}. Indeed, by \eqref{eq:defJ}, we
  obtain \eqref{eq:sumXnm1} which yields
  \begin{align*}
    \norm{X_{n-1}\circ f_n-X_n}&\leq d^M(X_\alpha,X_\beta)+2\\
    &\leq
    2\norm{X_\alpha-X_\beta}+2\quad\text{(by
      \eqref{eq:dequiv})}\\
    &\leq 4\bar M+2,
  \end{align*}
  as $X_\alpha,X_\beta\in\F_0^M$.  Therefore, by
  the triangle inequality, $\norm{X_{n-1}\circ
    f_n}\leq 5\bar M+2$ so that
  $\norm{X_{n-1}\circ f_n}$ and $\norm{X_{n}}$ are
  bounded by a constant depending only on
  $M$. Thus, we can use \eqref{eq:stabSt} and get
  from \eqref{eq:esbefSt} that
  \begin{equation*}
    \norm{\bar X_{n-1}^t\circ f_{n-1}^t-\bar
      X_{n}^t}\leq C_M\norm{X_{n-1}\circ f_{n-1}-X_n},
  \end{equation*}
where from now on $C_M$ denotes some constant dependent on $M$ and $T$.
  Similarly for $\tilde f_n^t=g_n^t\circ \tilde
  f_n\circ (g_{n-1}^t)^{-1}$, we get that
  \begin{equation*}
    \norm{\bar X_{n-1}^t-\bar
      X_{n}^t\circ \tilde f_{n}^t}\leq C_M\norm{X_{n-1}-X_n\circ \tilde f_{n}}.
  \end{equation*}
  Finally, we have
  \begin{align*}
    d^M(\bar S_tX_\alpha,\bar S_tX_\beta)&\leq
    \sum_{n=1}^N(\norm{\bar X_{n-1}^t\circ
      f_{n-1}^t-X_{n}}+\norms{\bar X_{n-1}^t-\bar
      X_n^t\circ \tilde{f}_{n}^t})\\
    &\leq C_M \sum_{n=1}^N(\norm{ X_{n-1}\circ
      f_{n-1}-X_{n}}+\norms{ X_{n-1}-
      X_n\circ \tilde{f}_{n}})\\
    &\leq C_M(d^M(X_\alpha,X_\beta)+2\epsi).
  \end{align*}
  The result follows by letting $\epsi$ tend to
  zero.
\end{proof}

\section{From Lagrangian to Eulerian coordinates}

We now introduce a second set of coordinates, the
so--called Eulerian coordinates. Therefore let us
first consider $X=(y,U,H)\in\F$. We can define
Eulerian coordinates as in \cite{HR} and also
obtain the same mappings between Eulerian and
Lagrangian coordinates (see also Figure~\ref{fig:explfig}). For completeness we will
state the results here.

\begin{figure}[h]
  \centering
  \includegraphics[width=10cm]{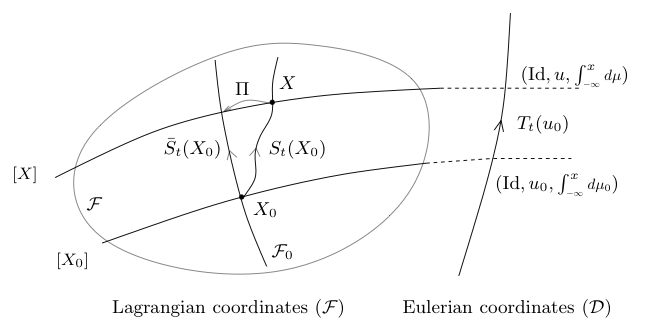}
  \caption{A schematic illustration of the
    construction of the semigroup. The set $\F$
    where the Lagrangian variables are defined is
    represented by the interior of the closed
    domain on the left. The equivalence classes
    $[X]$ and $[X_0]$ (with respect to the action
    of the relabeling group $G$) of $X$ and $X_0$,
    respectively, are represented by the
    horizontal curves. To each equivalence class
    there corresponds a unique element in $\F_0$
    and $\D$ (the set of Eulerian variables). The
    sets $\F_0$ and $\D$ are represented by the
    vertical curves.}
    \label{fig:explfig}
\end{figure}

\begin{definition}
  \label{def:D}
 The set $\D$ consists of all pairs $(u,\mu)$ such that 
\begin{enumerate}
 \item 
  $u\in H^1(\Real)$, and 
\item
$\mu$ is a positive Radon measure whose absolutely continuous part $\mu_{ac}$  satisfies 
\begin{equation}
 \mu_{ac}=u^2+u_x^2.
\end{equation}
\end{enumerate}
\end{definition}

We can define a mapping, denoted by $L$, from $\D$
to $\F_0$:
\begin{definition}
 For any $(u,\mu)$ in $\D$ let,
\begin{equation}
\left\{ 
\begin{aligned}
  y(\xi)&=\sup\{y \mid   \mu((-\infty,y))+y<\xi\},\\
  H(\xi)& =\xi-y(\xi),\\ 
  U(\xi)&=u\circ y(\xi).
 \end{aligned}
\right.
\end{equation}
Then $(y,U,H)\in\F_0$, and we denote by $L:\D\to \F_0$ the map which to any $(u,\mu)$ associates $X\in\F_0$. 
\end{definition}
Thus from any initial data $(u_0,\mu_0)\in\D$, we can construct a solution of \eqref{equivsys} in $\F$ with initial data $X_0=L(u_0,\mu_0)\in\F_0$. It remains to go back to the original variables, which is the purpose of the mapping $M$ defined as follows: 

\begin{definition}
 Given any element $X $ in $\F_0$, then $(u,\mu)$ defined as follows 
\begin{equation}
u(x)=U(\xi) \text{ for any } \xi \text{ such that } x=y(\xi), 
\end{equation}
\begin{equation}
 \mu=y_\#(\nu d\xi), 
\end{equation}
belongs to $\D$. We denote by $M:\F_0\to \D$ the map which to any $X$ in $\F_0$ associates $(u,\mu)$.
\end{definition}
In fact, $M$ can be seen as a map from $\F/\Gr\to D$, as any two elements belonging to the same equivalence class in $\F$ are mapped to the same element in $\D$ (cf.~\cite{HR}).  
Moreover, identifying elements belonging to the same equivalence class, the mappings $L$ and $M$ are invertible and 
\begin{equation}\label{eq:inversma}
 L\circ M=\id_{\F/\Gr}, \quad \text{ and }\quad M\circ L=\id_{\D}.
\end{equation}

We will now use these mappings for defining also a Lipschitz metric on $\D$. 

\begin{definition}
  Let
  \begin{equation}
    T_t:=MS_tL \colon \D\rightarrow \D.
  \end{equation}
\end{definition}

Next we show that $T_t$ is a Lipschitz continuous
semigroup by introducing a metric on
$\D$. Using the map $L$ we can transport the topology from $\F_0$ to $\D$. 

\begin{definition}
  Define the metric $d_\D\colon
  \D\times \D \rightarrow
  [0,\infty)$ by
  \begin{equation}
    d_\D((u,\mu),(\tilde{u},\tilde{\mu}))=d(L(u,\mu),L(\tilde{u},\tilde{\mu})).
  \end{equation}
\end{definition}

The Lipschitz stability of the semigroup $T_t$ follows then naturally from Theorem~\ref{th:stab}. It holds on sets of bounded energy, which are given as follows.

\begin{definition}
\label{def:DM}
 Given $M>0$, we define the subsets $\D^M$ of $\D$, which correspond to sets of bounded energy, as
\begin{equation}
 \D^M=\{ (u,\mu)\in\D \ | \ \mu(\Real)\leq M\}.  
\end{equation}
On the set $\D^M$ we define the metric $d_{\D^M}$ as 
\begin{equation}\label{eq:defDM}
 d_{\D^M}((u,\mu),(\tilde u, \tilde \mu))=d^M (L(u,\mu), L(\tilde u,\tilde \mu)),
\end{equation}
where the metric $d^M$ is defined as in Definition~\ref{def:metric}.
\end{definition}

Definition \ref{def:DM} is well-posed as we can check from the definition of $L$:  If $(u,\mu)\in \D^M$, then $L(u,\mu)\in \F_0^M$. 

\begin{theorem}
 The semigroup $(T_t, d_\D)$ is a continuous semigroup on $\D$ with respect to the metric $d_D$. The semigroup is Lipschitz continuous on sets of bounded energy, that is: Given $M>0$ and a time interval $[0,T]$, there exists a constant $C_M$, which only depends on $M$ and $T$ such that for any $(u,\mu)$ and $(\tilde u,\tilde \mu)$ in $\D^M$, we have 
\begin{equation}
 d_{D^M}(T_t(u,\mu),T_t(\tilde u, \tilde\mu))\leq C_Md_{\D^M}((u,\mu),(\tilde u, \tilde \mu))
\end{equation}
 for all $t\in [0,T]$.
\end{theorem}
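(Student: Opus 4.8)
The plan is to transport the Lipschitz stability result for the Lagrangian semigroup $\bar S_t$ (Theorem~\ref{th:stab}) across the bijection between $\F_0$ and $\D$ provided by the maps $L$ and $M$. Since the metrics $d_\D$ and $d_{\D^M}$ are \emph{defined} by pulling back $d$ and $d^M$ through $L$, essentially all the analytic work has already been done; what remains is to verify that $T_t$ is compatible with $\bar S_t$ under these maps, and that the relevant sets of bounded energy are matched up correctly. First I would establish the key intertwining relation, namely that on $\F_0$ one has $L\circ M=\id_{\F_0}$ (this follows from \eqref{eq:inversma} since elements of $\F_0$ are canonical representatives of their equivalence classes), and consequently
\begin{equation*}
  L\circ T_t=L\circ M\circ S_t\circ L=\bar S_t\circ L,
\end{equation*}
where I use $\bar S_t=\Pi\circ S_t$ together with the fact that $L$ lands in $\F_0$ and $M$ identifies elements of the same equivalence class, so $M\circ S_t=M\circ\Pi\circ S_t=M\circ\bar S_t$. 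This identity is the crux: it says that running the Eulerian flow and then mapping to Lagrangian coordinates is the same as mapping first and running the Lagrangian flow.

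With this relation in hand, the Lipschitz estimate is immediate. Given $(u,\mu),(\tilde u,\tilde\mu)\in\D^M$, the remark after Definition~\ref{def:DM} gives $L(u,\mu),L(\tilde u,\tilde\mu)\in\F_0^M$, and moreover $\F_0^M$ is preserved by $\bar S_t$ (as used in the proof of Theorem~\ref{th:stab}, since $\F^M$ is conserved by both the flow and relabeling). Hence I compute
\begin{align*}
  d_{\D^M}(T_t(u,\mu),T_t(\tilde u,\tilde\mu))
  &=d^M\big(L(T_t(u,\mu)),L(T_t(\tilde u,\tilde\mu))\big)\\
  &=d^M\big(\bar S_t L(u,\mu),\bar S_t L(\tilde u,\tilde\mu)\big)\\
  &\leq C_M\,d^M\big(L(u,\mu),L(\tilde u,\tilde\mu)\big)\\
  &=C_M\,d_{\D^M}((u,\mu),(\tilde u,\tilde\mu)),
\end{align*}
using the intertwining relation in the second line and Theorem~\ref{th:stab} in the third. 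This establishes the Lipschitz continuity on sets of bounded energy for the constant $C_M$ furnished by Theorem~\ref{th:stab}, which depends only on $M$ and $T$.

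For the continuity of $(T_t,d_\D)$ on all of $\D$ I would argue similarly but using the unrestricted metric $d$ and the corresponding stability of $\bar S_t$ with respect to $d$ (which follows from the local version of Theorem~\ref{th:stab} together with Lemma~\ref{lem:bdEf0M} relating $\F_0^M$ and $\F_0\cap B_M$); continuity in $t$ comes from the continuity of $t\mapsto S_t X$ in $C^1(\Real_+,E)$ asserted in Theorem~\ref{th:global} combined with the upper bound $d\leq 2\norm{\dott}$ from \eqref{eq:dequiv}. The semigroup property $T_{t+s}=T_tT_s$ reduces, via the same intertwining, to the semigroup property of $\bar S_t$ recorded after \eqref{eq:PiSt}. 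I expect the main obstacle to be purely bookkeeping rather than analytic: one must be careful that $L$ and $M$ are genuine inverses only \emph{after} passing to equivalence classes, so the identity $M\circ S_t=M\circ\bar S_t$ must be justified by invoking that $M$ factors through $\F/\Gr$ (as noted after the definition of $M$), and that the energy bound $M$ is exactly preserved under the flow so that the constant in the estimate stays uniform on $\D^M$. Once these identifications are pinned down, no further estimates are needed.
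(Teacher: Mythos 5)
Your proposal is correct and follows essentially the same route as the paper: both reduce the Lipschitz estimate to Theorem~\ref{th:stab} via the identity $d_{\D^M}=d^M\circ(L\times L)$ together with the intertwining $L\circ T_t=\bar S_t\circ L$ (the paper writes this as $LM\bar S_tL=\bar S_tL$ using \eqref{eq:inversma}), and both obtain the semigroup property of $T_t$ from that of $\bar S_t$. Your extra care in justifying $M\circ S_t=M\circ\bar S_t$ via the factorization of $M$ through $\F/\Gr$ is a point the paper passes over silently, but it is not a different argument.
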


\begin{proof}
First we prove that $T_t$ is a semigroup. Since $\bar{S}_t$ is a mapping from $\F_0$ to $\F_0$, we have 
 \begin{equation*}
    T_{t}T_{t'}=M\bar S_tLM\bar S_{t'}L=M\bar S_t\bar S_{t'}L=M\bar S_{t+t'}L=T_{t+t'}
  \end{equation*}
  where we also used \eqref{eq:inversma} and the
  semigroup property of $\bar S_t$. We now prove
  the Lipschitz continuity of $T_t$. By using
  Theorem \ref{th:stab}, we obtain that 
  \begin{align}\nn
    d_{\D^M}(T_t(u,\mu),T_t(\tilde{u},\tilde{\mu}))
    & = d^M( LM\bar S_tL(u,\mu)
    LM\bar S_tL(\tilde{u},\tilde{\mu}))\\\nn
    & = d^M( \bar S_t L(u,\mu), \bar S_t L(\tilde{u},\tilde{\mu}))\\
    & \leq C_M d^M( L(u,\mu),
     L(\tilde{u},\tilde{\mu}))\\\nn & = C_M
    d_{\D^M}((u,\mu),(\tilde{u},\tilde{\mu})).
  \end{align}
\end{proof}

By a weak solution of the Camassa--Holm equation
we mean the following.
\begin{definition}
  Let $u\colon\Real\times\Real \rightarrow \Real$ that
  satisfies
  \begin{enumerate}
  \item $u\in L^\infty ([0,\infty), H^1(\Real))$,
  \item the equations
    \begin{equation}\label{weak1}
      \iint_{\Real_+\times \Real}-u(t,x)\phi_t(t,x)+(u(t,x)u_x(t,x)+P_x(t,x))\phi(t,x)dxdt = \int_\Real u(0,x)\phi(0,x)dx,
    \end{equation}
    and
    \begin{equation}\label{weak2}
      \iint_{\Real_+\times \Real}(P(t,x)-u^2(t,x)-\frac{1}{2} u_x^2(t,x))\phi(t,x)+P_x(t,x)\phi_x(t,x)dxdt=0,
    \end{equation}
  \end{enumerate}
  hold for all $\phi\in C_0^\infty
  ([0,\infty),\Real)$. Then we say that $u$ is a weak
  global solution of the Camassa--Holm equation.
\end{definition}

\begin{theorem}
  Given any initial condition
  $(u_0,\mu_0)\in\D$, we denote
  $(u,\mu)(t)=T_t(u_0,\mu_0)$. Then $u(t,x)$ is a
  weak, global solution of the Camassa--Holm
  equation.
\end{theorem}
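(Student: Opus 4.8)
The plan is to pull the weak formulation back to Lagrangian coordinates, where the flow is a genuine $C^1$ solution of the system \eqref{equivsys}, and then push the resulting identities forward again by a change of variables along the characteristics. Set $X(t)=(y,U,H)(t)=S_tL(u_0,\mu_0)\in\F$. Since the mapping $M$ is constant on equivalence classes (it factors through $\F/\Gr$) and $\Pi$ relabels within an equivalence class, we have $(u,\mu)(t)=M\bar S_tL(u_0,\mu_0)=M(X(t))$, so by the definition of $M$ the solution satisfies $U(t,\xi)=u(t,y(t,\xi))$. By Lemma~\ref{lem:Fpres}, $X(t)\in\F$ for all $t$, hence $y(t,\dott)=\id+\zeta(t,\dott)$ with $\zeta$ bounded and $y_\xi\geq0$, so $\xi\mapsto y(t,\xi)$ is continuous, nondecreasing and onto $\Real$. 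Together with \eqref{eq:lagcoord3} this yields the pointwise identities I use repeatedly: on $\{y_\xi>0\}$ one has $U_\xi=(u_x\circ y)\,y_\xi$ and $H_\xi=((u^2+u_x^2)\circ y)\,y_\xi$, while on $\{y_\xi=0\}$ one has $U_\xi=0$; in particular $(u_x\circ y)\,y_\xi$ is well defined a.e. I would first record the change-of-variables formula $\int_\Real g\,dx=\int_\Real(g\circ y)\,y_\xi\,d\xi$ for $g\in L^1(\Real)$, valid since $y$ is monotone, Lipschitz and surjective (the flat parts of $y$ contributing nothing on the right).

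The second ingredient is the consistency of $P$. Changing variables $z=y(\eta)$ in \eqref{repP} and using $U^2y_\xi+H_\xi=2\big((u^2+\frac12u_x^2)\circ y\big)\,y_\xi$, I obtain that the $P$ of \eqref{repP} equals $P(t,y(t,\xi))$ for
\[
  P(t,x)=\frac12\int_\Real\E^{-\abs{x-z}}\big(u^2+\frac12u_x^2\big)(t,z)\,dz,
\]
and, in the same way, \eqref{repQ} gives $Q(t,\xi)=P_x(t,y(t,\xi))$ with $P_x=G'\ast(u^2+\frac12u_x^2)$, where $G(x)=\frac12\E^{-\abs{x}}$. Since $u^2+\frac12u_x^2\in L^1(\Real)$ and $G-G''=\de$, the function $P(t,\dott)$ lies in $W^{2,1}$ and solves $P-P_{xx}=u^2+\frac12u_x^2$ as in \eqref{eq:P}. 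Equation \eqref{weak2} is precisely this identity tested against $\phi$ with one integration by parts in $x$ (legitimate as $\phi$ has compact support), and so follows directly.

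For \eqref{weak1} the key computation is to differentiate $t\mapsto\int_\Real u(t,x)\phi(t,x)\,dx=\int_\Real U\,\phi(t,y)\,y_\xi\,d\xi$ in time. The $\xi$-integrand is supported on a bounded interval, uniformly for $t$ in the compact time support of $\phi$, and $X(t)$ is $C^1$ in $t$ with $y_\xi,U_\xi\in L^\infty$, so one may differentiate under the integral sign. Substituting $y_t=U$, $U_t=-Q$ and $\pa_t y_\xi=U_\xi$ from \eqref{equivsys} gives
\[
  \frac{d}{dt}\int_\Real u\phi\,dx=\int_\Real\Big[\big(-Q\phi(t,y)+U\phi_t(t,y)+U^2\phi_x(t,y)\big)y_\xi+UU_\xi\phi(t,y)\Big]d\xi.
\]
In the last term I substitute $U_\xi=(u_x\circ y)\,y_\xi$; then changing variables back by $\int_\Real(g\circ y)y_\xi\,d\xi=\int_\Real g\,dx$ and using $Q=P_x\circ y$, $U=u\circ y$, the right-hand side becomes $\int_\Real[-P_x\phi+u\phi_t+u^2\phi_x+uu_x\phi]\,dx$. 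Integrating over $t\in[0,\infty)$, the left-hand side reduces to $-\int_\Real u_0\phi(0,\dott)\,dx$ since $\phi$ vanishes for large $t$; finally the relation $\int_\Real u^2\phi_x\,dx=-\int_\Real 2uu_x\phi\,dx$ (integration by parts, $u^2\in W^{1,1}$) converts $-u^2\phi_x-uu_x\phi$ into $uu_x\phi$, yielding exactly \eqref{weak1}.

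The step I expect to be the main obstacle is the low regularity of these manipulations: $y$ is only nondecreasing, not strictly so, $u_x$ is merely $L^2$, and $u_x\circ y$ is a priori undefined on the flat parts of $y$. The resolution is that \eqref{eq:lagcoord3} forces $U_\xi=0$ wherever $y_\xi=0$, so every term involving $u_x\circ y$ carries a factor $y_\xi$ and is unambiguous; all the identities above are thus justified on $\{y_\xi>0\}$ and hold trivially on its complement. These technical points, and the justification of differentiating under the integral sign, are carried out in detail in \cite{HR}, from which the theorem follows.
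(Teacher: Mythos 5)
Your proposal is correct and follows essentially the same route as the paper: change variables $x=y(t,\xi)$ along the characteristics, use the $C^1$-in-time Lagrangian system \eqref{equivsys} together with $Q=P_x\circ y$ and the identity \eqref{eq:lagcoord3} (which handles the set $\{y_\xi=0\}$), and convert back to verify \eqref{weak1} and \eqref{weak2}. The only differences are cosmetic reorganizations — you differentiate $\int u\phi\,dx$ in $t$ and verify $P-P_{xx}=u^2+\tfrac12u_x^2$ in Eulerian variables, where the paper integrates by parts in $t$ and $\xi$ inside the double integral using the formula for $Q_\xi$ — and these amount to the same computation.
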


\begin{proof}
  After making the change of variables
  $x=y(t,\xi)$ we get on the one hand
  \begin{align}\nn
    -\iint_{\Real_+\times\Real} & u(t,x)\phi_t(t,x)dxdt
    = - \iint_{\Real_+\times
      \Real}u(t,y(t,\xi))\phi_t(t,y(t,\xi))y_\xi(t,\xi)d\xi
    dt\\\nn & =- \iint_{\Real_+\times \Real}
    U(t,\xi)[(\phi(t,y(t,\xi))_t-\phi_x(t,y(t,\xi)))y_t(y,\xi)]y_\xi(t,\xi)d\xi
    dt\\\nn
    & = -\iint_{\Real_+\times \Real}[U(t,\xi)y_\xi(t,\xi)(\phi(t,y(t,\xi)))_t-\phi_\xi(t,y(t,\xi))U(t,\xi)^2]d\xi dt\\
    & = \int_\Real
    U(0,\xi)\phi(0,y(0,\xi))y_\xi(0,\xi)d\xi\\\nn
    & \quad + \iint_{\Real_+\times \Real}
    [U_t(t,\xi)y_\xi(t,\xi)+U(t,\xi) y_{\xi
      t}(t,\xi)]\phi(t,y(t,\xi))d\xi dt \\\nn &
    \quad +\iint_{\Real_+\times \Real}
    U^2(t,\xi)\phi_\xi(t,y(t,\xi))d\xi dt\\\nn & =
    \int_\Real u(0,x)\phi(0,x)dx\\\nn & \quad
    -\iint_{\Real_+\times \Real}
    (Q(t,\xi)y_\xi(t,\xi)+U_\xi(t,\xi)U(t,\xi))\phi(t,y(t,\xi))d\xi
    dt,
  \end{align}
  while on the other hand
  \begin{align}\nn
    \iint_{\Real_+\times \Real} & (u(t,x)u_x(t,x)+P_x(t,x))\phi(t,x)dxdt\\
    & = \iint_{\Real_+\times
      \Real}(U(t,\xi)U_\xi(t,\xi)+P_x(t,y(t,\xi))y_\xi(t,\xi))\phi(t,y(t,\xi))d\xi
    dt\\\nn & = \iint_{\Real_+\times \Real}
    (U(t,\xi)U_\xi(t,\xi)+Q(t,\xi)y_\xi(t,\xi))\phi(t,y(t,\xi))d\xi dt,
  \end{align}
  which shows that \eqref{weak1} is fulfilled.
 Equation \eqref{weak2} can be shown analogously
  \begin{align}\nn
    \iint_{\Real_+\times \Real}&
    P_x(t,x)\phi_x(t,x)dxdt\\\nn
    & = \iint_{\Real_+\times \Real} Q(t,\xi)y_\xi(t,\xi)\phi_x(t, y(t,\xi))d\xi dt\\
    &= \iint_{\Real_+\times \Real}
    Q(t,\xi)\phi_\xi(t,y(t,\xi)) d\xi dt\\\nn & =
    -\iint_{\Real_+\times
      \Real}Q_\xi(t,\xi)\phi(t,y(t,\xi)) d\xi dt\\\nn
    & = \iint_{\Real_+\times \Real} [\frac{1}{2}
    H_\xi(t,\xi)+(\frac{1}{2}U^2(t,\xi)-P(t,\xi))y_\xi(t,\xi)]\phi(t,y(t,\xi))d\xi
    dt\\\nn &= \iint_{\Real_+\times \Real}[\frac{1}{2}
    u_x^2(t,x)+u^2(t,x)-P(t,x)]\phi(t,x) dx dt.
  \end{align}
  In the last step we used the following
  \begin{align}
   \int_\Real u^2+u_x^2 dx & =\int_\Real    u^2\circ y y_\xi+u^2_\xi\circ yy_\xi d\xi\\ \nn
    & =\int_{\{\xi\in\Real \mid y_\xi(t,\xi)>0\}} U^2y_\xi+\frac{U_\xi^2}{y_\xi}d\xi=\int_\Real H_\xi d\xi. 
  \end{align}
 For almost every $t\in\Real_+$ the set $\{\xi\in\Real \mid y_\xi(t,\xi)>0\}$  is of full measure and hence
\begin{equation}
 \int_\Real u^2+u_x^2 dx=\int_\Real H_\xi d\xi,
\end{equation}
which is bounded by a constant for all times. Thus we proved that $u$ is a weak solution of the Camassa--Holm equation. 
\end{proof}

\section{The topology on $\D$} \label{sec:top}

\begin{proposition}
  The mapping
  \begin{equation}
    u\mapsto  (u,(u^2+u_x^2) dx)
  \end{equation} 
  is continuous from $H^1(\Real)$ into
  $\D$. In other words, given a sequence
  $u_n\in H^1(\Real)$ converging to $u\in H^1(\Real)$, 
  then $(u_n,(u_n^2+u_{nx}^2)dx)$ converges to $(u,(u^2+u_x^2)
  dx)$ in $\D$.
\end{proposition}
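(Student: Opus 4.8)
The plan is to reduce the statement to a convergence in the Banach space $E$ and then to establish that convergence componentwise. Write $\mu=(u^2+u_x^2)\,dx$, $\mu_n=(u_n^2+u_{nx}^2)\,dx$, and set $(y,U,H)=L(u,\mu)$, $(y_n,U_n,H_n)=L(u_n,\mu_n)$. Using the definition $d_\D((u,\mu),(\tilde u,\tilde\mu))=d(L(u,\mu),L(\tilde u,\tilde\mu))$ together with the upper bound in \eqref{eq:dequiv}, namely $d(X_\alpha,X_\beta)\le2\norm{X_\alpha-X_\beta}$, it suffices to prove $\norm{L(u_n,\mu_n)-L(u,\mu)}\to0$ in $E$. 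Since $\mu$ is absolutely continuous with finite mass $\norm{u}_{H^1}^2$, the function $G(x)=x+\mu((-\infty,x))$ is a strictly increasing, absolutely continuous bijection of $\Real$ with $G'=1+r$ a.e., where $r=u^2+u_x^2$; hence $y=G^{-1}$ is $1$-Lipschitz, $y_\xi=1/(1+r\circ y)$, $U=u\circ y$, and $H=\id-y$, so that $\zeta=y-\id=-H$ and the $\zeta$- and $H$-components are controlled simultaneously. The convergence input is that $u_n\to u$ in $H^1(\Real)$ yields $r_n\to r$ in $L^1(\Real)$, $u_{nx}\to u_x$ in $L^2(\Real)$, and $u_n\to u$ in $\Linf$.

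For the sup-norm parts I would first note $\norm{G_n-G}_{L^\infty}\le\norm{r_n-r}_{L^1}\to0$, since $G_n(x)-G(x)=\int_{-\infty}^x(r_n-r)\,dz$. Because each $y_n=G_n^{-1}$ is $1$-Lipschitz, the identity $y_n(G_n(y(\xi)))=y(\xi)$ gives $\norm{y_n-y}_{L^\infty}\le\norm{G_n-G}_{L^\infty}\to0$, which controls the $L^\infty$ parts of both the $\zeta$- and $H$-components. For $U$, the bound $\abs{U_n-U}\le\norm{u_n-u}_{\Linf}+\abs{u(y_n)-u(y)}$ together with the Hölder estimate $\abs{u(a)-u(b)}\le\norm{u_x}_{L^2}\abs{a-b}^{1/2}$ gives $\norm{U_n-U}_{\Linf}\to0$.

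The core of the proof is the $L^2$ convergence of the derivatives $H_{n,\xi}$ (equal to $-\zeta_{n,\xi}$) and $U_{n,\xi}$. Writing $\phi_n=r_n/(1+r_n)$ and $g_n=u_{nx}/(1+r_n)$ (and $\phi,g$ without subscript), one has $H_{n,\xi}(\xi)=\phi_n(y_n(\xi))$ and $U_{n,\xi}(\xi)=g_n(y_n(\xi))$. I would split each difference as
\[
\phi_n(y_n)-\phi(y)=\big(\phi_n(y_n)-\phi(y_n)\big)+\big(\phi(y_n)-\phi(y)\big),
\]
and likewise for $g$. In the first bracket I change variables $x=y_n(\xi)$, so that $d\xi=(1+r_n)\,dx$, and estimate using the pointwise inequalities $\abs{\phi_n-\phi}\le\abs{r_n-r}$, $\abs{\phi_n-\phi}\,r_n\le\abs{r_n-r}$, and, for $g$, the identity $g_n-g=\frac{u_{nx}-u_x}{1+r_n}+\frac{u_x(r-r_n)}{(1+r_n)(1+r)}$ combined with $u_x^2\le r$; all of these reduce the first bracket to multiples of $\norm{r_n-r}_{L^1}+\norm{u_{nx}-u_x}_{L^2}^2\to0$. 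The same change of variables handles the $L^2$ convergence of $U_n$ itself, the first bracket being bounded by $\norm{u_n-u}_{L^2}^2+\norm{u_n-u}_{\Linf}^2\norm{u_n}_{H^1}^2\to0$.

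The delicate term, and the main obstacle, is the second bracket, where a fixed $L^1$- (resp.\ $L^2$-)function is composed with the converging maps $y_n\to y$ on the \emph{whole} line. Here I would approximate $\phi$ (resp.\ $g$, resp.\ $u$) by a function $\psi\in C_c(\Real)$ in $L^1((1+r)\,dx)$ (resp.\ $L^2((1+r)\,dx)$), which is legitimate since $\phi,g,u$ lie in these spaces and $C_c(\Real)$ is dense. Changing variables shows $\int\abs{\phi-\psi}(1+r_n)\,dx\le\int\abs{\phi-\psi}(1+r)\,dx+\norm{\phi-\psi}_{\Linf}\norm{r_n-r}_{L^1}$, so the $n$-dependence is absorbed into $\norm{r_n-r}_{L^1}\to0$ up to the small fixed error $\int\abs{\phi-\psi}(1+r)\,dx$; the same holds with $y$ in place of $y_n$. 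Finally $\int\abs{\psi(y_n)-\psi(y)}\,d\xi\to0$ by dominated convergence, the dominating function being a constant times the indicator of the bounded $\xi$-interval $y^{-1}([-R-1,R+1])$ when $\mathrm{supp}\,\psi\subset[-R,R]$ and $\norm{y_n-y}_{L^\infty}\le1$. A triangle inequality in $L^2(d\xi)$ then yields the second bracket $\to0$, completing all components. It is precisely this step, where only $L^1$ convergence of the energies is available on a non-compact domain, that forces the $C_c$-density and uniform-integrability argument in place of a naive dominated convergence on all of $\Real$; this is the feature absent in the periodic treatment of \cite{GHR}.
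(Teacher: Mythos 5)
Your proposal is correct and follows the same route as the paper: both reduce the statement, via the upper bound in \eqref{eq:dequiv}, to showing $\norm{L(u_n,\mu_n)-L(u,\mu)}\to 0$ in $E$. The only difference is that the paper delegates that $E$-convergence to the proof of \cite[Proposition 5.1]{HR}, whereas you prove it in full (correctly, including the genuinely delicate point on the line, namely the need for a $C_c$-approximation and uniform-integrability argument to handle compositions with $y_n$ over a non-compact domain).
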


\begin{proof}
  Let $X_n=(y_n, U_n, H_n)= L(u_n,
  (u_n^2+u_{nx}^2)dx)$ and
  $X=(y,U,H)=L(u,(u^2+u_x^2)dx)$. Then as in 
  the proof of \cite[Proposition 5.1]{HR} one can
  show that
  \begin{equation}
    X_n\to X \text{ in } E.
  \end{equation}
  Hence using \eqref{eq:dequiv}, we get that
  $\lim_{n\to\infty} d(X_n,X)=0$.
\end{proof}

\begin{proposition}
  Let $(u_n, \mu_n)$ be a sequence in
  $\D$ that converges to $(u,\mu)$ in
  $\D$. Then
  \begin{equation}
    u_n\rightarrow u \text{ in } L^\infty(\Real) \text{ and } \mu_n \overset{\ast}{\rightharpoonup}\mu.
  \end{equation}
\end{proposition}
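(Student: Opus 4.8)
The plan is to pull everything back to the Lagrangian picture, where convergence in $\D$ is by definition the statement $d(X_n,X)\to0$ for $X_n=L(u_n,\mu_n)=(y_n,U_n,H_n)$ and $X=L(u,\mu)=(y,U,H)$, all lying in $\F_0$. The first step is simply to invoke the lower bound in \eqref{eq:dequiv} (equivalently Lemma~\ref{lem:LinfbdJ}), which yields $\norm{X_n-X}_{L^\infty}\le2d(X_n,X)\to0$, so that $y_n\to y$, $U_n\to U$ and $H_n\to H$ uniformly on $\Real$. I will record the structural facts available in $\F_0$: $y+H=\id$, $0\le y_\xi\le1$, $0\le H_\xi\le1$, $\abs{U_\xi}\le1$ almost everywhere, and $U_\xi^2\le y_\xi H_\xi$ by \eqref{eq:lagcoord3}. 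Finally, since $\norm{H_n}_{L^\infty}=\mu_n(\Real)$ converges to $\norm{H}_{L^\infty}$, the total masses are bounded by some constant $B$.

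For the uniform convergence of $u_n$, fix $x\in\Real$ and choose $\xi,\xi_n$ with $y(\xi)=y_n(\xi_n)=x$ (such preimages exist because $y,y_n$ are continuous, nondecreasing and tend to $\pm\infty$; on a flat part of $y$ the value $U(\xi)$ is independent of the chosen preimage, since $y_\xi=0$ forces $U_\xi=0$). Then $u(x)=U(\xi)$, $u_n(x)=U_n(\xi_n)$, and
\begin{equation*}
  \abs{u_n(x)-u(x)}\le\norm{U_n-U}_{L^\infty}+\abs{U(\xi_n)-U(\xi)}.
\end{equation*}
The crux is to bound $\abs{U(\xi_n)-U(\xi)}$ uniformly in $x$, even though $\abs{\xi_n-\xi}$ need not be small when mass concentrates. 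Here I would use $\abs{U_\eta}\le\sqrt{y_\eta H_\eta}\le\sqrt{y_\eta}$ together with Cauchy--Schwarz to get
\begin{equation*}
  \abs{U(\xi_n)-U(\xi)}\le\Big(\abs{y(\xi_n)-y(\xi)}\,\abs{\xi_n-\xi}\Big)^{1/2}.
\end{equation*}
Now $\abs{y(\xi_n)-y(\xi)}=\abs{y(\xi_n)-y_n(\xi_n)}\le\norm{y_n-y}_{L^\infty}$, while $\xi=x+H(\xi)$ and $\xi_n=x+H_n(\xi_n)$ give $\abs{\xi_n-\xi}=\abs{H_n(\xi_n)-H(\xi)}\le B$. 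Hence $\abs{u_n(x)-u(x)}\le\norm{U_n-U}_{L^\infty}+\big(B\,\norm{y_n-y}_{L^\infty}\big)^{1/2}$ uniformly in $x$, and the right-hand side tends to zero.

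For the weak-$*$ convergence, recall from the definition of $M$ that $\int_\Real\phi\,d\mu_n=\int_\Real\phi(y_n(\xi))\,dH_n(\xi)$. Given $\phi\in C_c(\Real)$ I would split
\begin{equation*}
  \int\phi\,d\mu_n-\int\phi\,d\mu=\int\big(\phi\circ y_n-\phi\circ y\big)\,dH_n+\int\phi\circ y\,(dH_n-dH).
\end{equation*}
The first integral is bounded by $\omega_\phi(\norm{y_n-y}_{L^\infty})\,\mu_n(\Real)\le\omega_\phi(\norm{y_n-y}_{L^\infty})\,B\to0$, where $\omega_\phi$ is the modulus of continuity of $\phi$. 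For the second, the uniform convergence of the bounded, nondecreasing functions $H_n$ forces $dH_n\overset{\ast}{\rightharpoonup}dH$ (integrate by parts against $C_c^1$ test functions and use density in $C_c$); since $\phi\circ y$ is continuous with compact support in $\xi$ (because $\phi$ has compact support and $y(\xi)\to\pm\infty$), the second integral also tends to zero. This yields $\mu_n\overset{\ast}{\rightharpoonup}\mu$.

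The main obstacle is the estimate on $\abs{U(\xi_n)-U(\xi)}$ in the second step: a naive use of the Lipschitz bound $\abs{U_\xi}\le1$ fails, because the preimages $\xi_n,\xi$ can be far apart precisely in the regions where the energy concentrates and $y$ is nearly flat. The relation \eqref{eq:lagcoord3} is exactly what rescues the argument, converting the near-flatness of $y$ into smallness of the oscillation of $U$, with the total energy bound $B$ supplying the uniformity in $x$.
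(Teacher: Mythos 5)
Your proof is correct and follows the same route as the paper: both reduce to $d(X_n,X)\to0$ in Lagrangian coordinates and invoke \eqref{eq:dequiv} to obtain $\norm{X_n-X}_{L^\infty}\to0$. The paper then simply refers to \cite[Proposition 5.2]{HR} for the passage back to Eulerian variables; your Cauchy--Schwarz estimate $\abs{U(\xi_n)-U(\xi)}\le\big(\abs{y(\xi_n)-y(\xi)}\,\abs{\xi_n-\xi}\big)^{1/2}$ via \eqref{eq:lagcoord3}, together with the push-forward argument for $\mu_n$, supplies exactly the omitted details and is carried out correctly.
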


\begin{proof}

  Let $X_n=(y_n, U_n, H_n)=L(u_n,\mu_n)$
  and $X=(y,U,H)=L(u,\mu)$ .  By the
  definition of the metric $d_\D$, we
  have $\lim_{n\to\infty}d(X_n,X)=0$. Using \eqref{eq:dequiv}, we immediately obtain 
  that
  \begin{equation}
    X_n \to X \text{ in } L^\infty(\Real).
  \end{equation}
  The rest can be proved as in \cite[Proposition
  5.2]{HR}.
\end{proof}

\noindent{\bf Acknowledgments.} 
K. G. gratefully acknowledges the hospitality of
the Department of Mathematical Sciences at the
NTNU, Norway, creating a great working environment
for research during the fall of 2009.


\begin{thebibliography}{XXX}

\bibitem{BC}
  \newblock A. Bressan and A. Constantin. 
  \newblock Global conservative solutions of the Camassa--Holm equation. 
  \newblock {\em Arch. Ration. Mech. Anal.} 183:215--239, 2007.

\bibitem{BHR} 
 \newblock A. Bressan, H. Holden, and  X. Raynaud.
  \newblock Lipschitz metric for the
  Hunter--Saxton equation.
  \newblock {\em J. Math. Pures Appl.} 94:68--92, 2010. 

\bibitem{CH}
  \newblock R. Camassa and D.~D. Holm. 
  \newblock An integrable shallow water equation with peaked solutions.
  \newblock {\em Phys. Rev. Lett} 71(11):1661--1664, 1993.

\bibitem{CHH}
  \newblock R. Camassa, D.~D. Holm, and J. Hyman.
  \newblock A new integrable shallow water equation.
  \newblock {\em Adv. Appl. Mech} 31:1--33, 1994.

\bibitem{cons:98} 
  \newblock A. Constantin and J. Escher.
  \newblock Global existence and blow-up for a shallow water equation.
  \newblock {\em Ann. Scuola Norm. Sup. Pisa Cl. Sci. (4)} 26:303--328, 1998.

\bibitem{cons:98b}
  \newblock A. Constantin and J. Escher.
  \newblock Wave breaking for nonlinear nonlocal shallow water equations. 
  \newblock {\em Acta Math.} 181:229--243, 1998.

\bibitem{cons:00}
  \newblock A. Constantin and J. Escher.
  \newblock On the blow-up rate and the blow-up set of breaking waves for a shallow water equation. 
  \newblock {Math. Z.} 233:75--91, 2000.



\bibitem{cla} 
  \newblock A. Constantin and D. Lannes. 
  \newblock The hydrodynamical relevance of the
Camassa--Holm and Degasperis--Procesi equations.
  \newblock {\em Arch. Rat. Mech. Anal.}, 192:165--186, 2009.


\bibitem{Dai_exact:98}
H.-H. Dai.
\newblock Exact traveling-wave solutions of an integrable equation arising in
  hyperelastic rods.
\newblock {\em Wave Motion}, 28(4):367--381, 1998.

\bibitem{Dai:98}
H.-H. Dai.
\newblock Model equations for nonlinear dispersive waves in a compressible
  {M}ooney--{R}ivlin rod.
\newblock {\em Acta Mech.}, 127(1-4):193--207, 1998.

\bibitem{Dai:2000}
H.-H. Dai and Y.~Huo.
\newblock Solitary shock waves and other travelling waves in a general
  compressible hyperelastic rod.
\newblock {\em Proc. R. Soc. Lond. Ser. A Math. Phys. Eng. Sci.},
  456(1994):331--363, 2000.



\bibitem{GHR}
  \newblock K. Grunert, H. Holden, and X. Raynaud.
  \newblock Lipschitz metric for the periodic Camassa--Holm equation.
  \newblock {\em J. Differential Equations} doi:10.1016/j.jde.2010.07.006, 2010. 

\bibitem{HR}
 \newblock  H.~Holden and X.~Raynaud.
  \newblock Global conservative solutions of the {C}amassa--{H}olm
  equation---a {L}agrangian point of view.
  \newblock {\em Comm. Partial Differential Equations} 32:1511--1549, 2007.

\bibitem{HolRay:06b} 
 \newblock H.~Holden and X.~Raynaud.
  \newblock Global conservative multipeakon solutions of the Camassa--Holm equation.
  \newblock {\em J. Hyperbolic Differ. Equ.} 4:39--64, 2007.


\bibitem{HolRay:06a}
  \newblock H.~Holden and X.~Raynaud.
  \newblock Global conservative solutions of the generalized
    hyperelastic-rod wave equation.
  \newblock {\em J. Differential Equations} 233:448--484, 2007.


\end{thebibliography}
\end{document}